\numberwithin{equation}{section}
\numberwithin{figure}{section}
\theoremstyle{plain}
\newtheorem{thm}{\protect\theoremname}
  \theoremstyle{plain}
  \newtheorem{prop}[thm]{\protect\propositionname}
  \theoremstyle{plain}
  \newtheorem{lem}[thm]{\protect\lemmaname}
  \theoremstyle{plain}
  \newtheorem{cor}[thm]{\protect\corollaryname}
  \theoremstyle{remark}
  \newtheorem{rem}[thm]{\protect\remarkname}
  \providecommand{\corollaryname}{Corollary}
  \providecommand{\lemmaname}{Lemma}
  \providecommand{\propositionname}{Proposition}
  \providecommand{\remarkname}{Remark}
\providecommand{\theoremname}{Theorem}
\begin{document}

\title{Affine lines in the complement of a smooth plane conic }

\author{Julie Decaup }

\address{Institut de Mathématiques de Toulouse, Université Paul Sabatier,
118 route de Narbonne, 31062 Toulouse Cedex 9, France }

\email{jdecaup@math.univ-toulouse.fr }

\author{Adrien Dubouloz}

\address{IMB UMR5584, CNRS, Univ. Bourgogne Franche-Comté, F-21000 Dijon,
France.}

\email{adrien.dubouloz@u-bourgogne.fr}

\thanks{This research was partialy funded by ANR Grant \textquotedbl{}BirPol\textquotedbl{}
ANR-11-JS01-004-01. }

\subjclass[2000]{14E07, 14E25, 14R25, 14J50.}
\begin{abstract}
We classify closed curves isomorphic to the affine line in the complement
of a smooth rational projective plane conic $Q$. Over a field of
characteristic zero, we show that up to the action of the subgroup
of the Cremona group of the plane consisting of birational endomorphisms
restricting to biregular automorphisms outside $Q$, there are exactly
two such lines: the restriction of a smooth conic osculating $Q$
at a rational point and the restriction of the tangent line to $Q$
at a rational point. In contrast, we give examples illustrating the
fact that over fields of positive characteristic, there exist exotic
closed embeddings of the affine line in the complement of $Q$. We
also determine an explicit set of birational endomorphisms of the
plane whose restrictions generates the automorphism group of the complement
of $Q$ over a field of arbitrary characteristic. 
\end{abstract}

\maketitle

\section*{Introduction}

A famous theorem of Abhyankar and Moh \cite{AM75} asserts that over
a field $k$ of characteristic zero, all closed embeddings of the
affine line $\mathbb{A}_{k}^{1}$ into the affine plane $\mathbb{A}_{k}^{2}$
are equivalent under the action of the group $\mathrm{Aut}_{k}(\mathbb{A}_{k}^{2})$
of algebraic $k$-automorphisms of $\mathbb{A}_{k}^{2}$: for any
two such closed embeddings with images $A$ and $A'$, there exists
$\Psi\in\mathrm{Aut}_{k}(\mathbb{A}_{k}^{2})$ such that $A'=\Psi(A)$.
In this article, we consider the classification of equivalence classes
of closed embeddings of the affine line into another smooth affine
surface very similar to the affine plane: the complement of a smooth
$k$-rational conic $Q$ in the projective plane $\mathbb{P}_{k}^{2}$. 

In the complex case, such a smooth affine surface $S=\mathbb{P}_{\mathbb{C}}^{2}\setminus Q$
has divisor class group $\mathrm{Cl}(S)=\mathbb{Z}_{2}$, integral
homology groups $H_{0}(S;\mathbb{Z})=\mathbb{Z}$, $H_{1}(S;\mathbb{Z})=\mathbb{Z}_{2}$
and $H_{i}(S;\mathbb{Z})=0$ for every $i\geq2$, and its logarithmic
Kodaira dimension $\overline{\kappa}(S)=\overline{\kappa}(\mathbb{P}^{2},K_{\mathbb{P}^{2}}+Q)$
(see \cite{Ii77}) is equal to $-\infty$. It is thus very close to
the affine plane from both algebraic and topological points of view.
It also contains many closed curves isomorphic to the affine line
$\mathbb{A}^{1}$. For instance, for every point $p\in Q$, $Q$ and
twice its tangent line $T_{p}Q$ at $Q$ generate a pencil $\mathcal{P}_{p}\subset\left|\mathcal{O}_{\mathbb{P}^{2}}(2)\right|$
whose members, except for the one $2T_{p}Q$, are smooth conics intersecting
$Q$ with multiplicity $4$ at $p$. The intersections with $S$ of
all members of $\mathcal{P}_{p}$ except $Q$ are thus isomorphic
to $\mathbb{A}^{1}$. The subgroup $\mathrm{Aut}(\mathbb{P}^{2},Q)$
of $\mathrm{Aut}(\mathbb{P}^{2})$ consisting of automorphism preserving
$Q$ acts transitively on $Q$, and for a given point $p_{0}\in Q$,
the action on $\mathcal{P}_{p_{0}}\setminus Q$ of the subgroup $\mathrm{Aut}(\mathbb{P}^{2},Q,p_{0})$
of $\mathrm{Aut}(\mathbb{P}^{2},Q)$ consisting of automorphisms fixing
$p_{0}$ has exactly two orbits: a fixed point $T_{p_{0}}Q$ and its
complement $\mathcal{P}_{p_{0}}\setminus(Q\cup T_{p_{0}}Q)$. Viewing
$\mathrm{Aut}(\mathbb{P}^{2},Q)$ as a subgroup of $\mathrm{Aut}(S)$
via the natural restriction homomorphism, it follows in particular
that $\mathrm{Aut}(S)$ acts on the set of so-defined affine lines
in $S$ with at most two orbits: the one of $T_{p_{0}}Q\cap S$ and
the one of $Q_{1}\cap S$ for a fixed member $Q_{1}$ of $\mathcal{P}_{p_{0}}\setminus(Q\cup T_{p_{0}}Q)$.
But since $\mathrm{Cl}(S\setminus(S\cap T_{p_{0}}Q_{1}))$ is trivial
while $\mathrm{Cl}(S\setminus(S\cap Q_{1}))\simeq\mathbb{Z}_{2}$,
it follows that $T_{p_{0}}Q\cap S$ and $Q_{1}\cap S$ cannot belong
to a same orbit of the action of $\mathrm{Aut}(S)$ on the set of
closed curves in $S$ isomorphic to $\mathbb{A}^{1}$. So in contrast
with the case of $\mathbb{A}_{\mathbb{C}}^{2}$ , the best we can
hope for is that the action of $\mathrm{Aut}(S)$ on the set of such
closed curves has precisely two orbits. Our main result just below
implies that this is exactly the case: 
\begin{thm}
\label{thm:MainThm} Let $k$ be a field of characteristic zero, let
$Q\subset\mathbb{P}_{k}^{2}$ be a smooth conic and let $S=\mathbb{P}_{k}^{2}\setminus Q$.
Suppose that $A\subset S$ is a closed curve isomorphic to $\mathbb{A}_{k}^{1}$
and let $\overline{A}\subset\mathbb{P}_{k}^{2}$ be its closure. Then
$Q$ is $k$-rational and for every given $k$-rational point $p_{0}\in Q$
and every smooth $k$-rational member $Q_{1}\neq Q$ of the pencil
$\mathcal{P}_{p_{0}}$ generated by $Q$ and $2T_{p_{0}}Q$, there
exists a birational map $\Phi:\mathbb{P}_{k}^{2}\dashrightarrow\mathbb{P}_{k}^{2}$
defined over $k$, restricting to an automorphism of $S$, such that
\[
\Phi_{*}(\overline{A})=\begin{cases}
Q_{1} & \textrm{if }\deg\overline{A}\textrm{ is even}\\
T_{p_{0}}Q & \textrm{if }\deg\overline{A}\textrm{ is odd}.
\end{cases}
\]
In particular, there are precisely two classes of closed curves isomorphic
to $\mathbb{A}_{k}^{1}$ in $S$ up to the action of $\mathrm{Aut}_{k}(S)$. 
\end{thm}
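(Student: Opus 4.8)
The plan is to put $\overline A$ into a normal form and then straighten it by means of birational self-maps of $\mathbb P^2$ restricting to automorphisms of $S$; write $G$ for the group of such maps and argue by induction on $d:=\deg\overline A$. First, the preliminary reductions. Since $A\cong\mathbb A^1_k$ and $S$ is affine (it is the complement of the ample divisor $Q$), the closure $\overline A$ is a rational curve with $\overline A\cap Q\neq\varnothing$, and the normalization $\nu\colon\mathbb P^1_k\to\overline A$ is an isomorphism over the smooth locus of $\overline A$, hence over $A$; as $A\cong\mathbb A^1_k$, this forces $\nu^{-1}(A)=\mathbb P^1_k\setminus\{q\}$ for a single $k$-rational point $q$, so that $\overline A\cap Q=\{\nu(q)\}=:\{p\}$ is one $k$-rational point. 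Thus $Q$ is $k$-rational, $\overline A$ is smooth along $A$ and unibranch at $p$ (a single point of $\mathbb P^1_k$ lies over $p$), and $I_p(\overline A,Q)=2d$ by B\'ezout. Because $\mathrm{Aut}(\mathbb P^2,Q)\subset G$ induces all of $\mathrm{PGL}_2(k)=\mathrm{Aut}(Q)$, which is transitive on $Q(k)$, we may assume $p=p_0$. It then suffices to find one $\Phi\in G$ carrying $\overline A$ onto a tangent line of $Q$ at a $k$-point when $d$ is odd, resp. onto a smooth member $\neq Q$ of some pencil $\mathcal P_{p'}$ when $d$ is even: composing afterwards with a suitable element of $\mathrm{Aut}(\mathbb P^2,Q)$ — transitive on $Q(k)$ and, for fixed $p_0$, transitive on the $k$-rational smooth members of $\mathcal P_{p_0}\setminus\{Q\}$ (the stabilizer of $p_0$ acts on $\mathcal P_{p_0}\setminus\{Q\}\cong\mathbb A^1$ through its torus part, with only fixed point $2T_{p_0}Q$) — lands on the prescribed $T_{p_0}Q$ or $Q_1$. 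The two cases are disjoint and exhaust all possibilities because the parity of $d$ is the class of $A$ in $\mathrm{Cl}(S)\cong\mathbb Z_2$, which is preserved by the restriction to $S$ of any element of $G$.

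Next, the induction. If $d=1$ then $\overline A$ is a line with $I_{p_0}(\overline A,Q)=2$, i.e. $\overline A=T_{p_0}Q$; if $d=2$ then $\overline A$ is a smooth conic with $I_{p_0}(\overline A,Q)=4$, i.e. a member of $\mathcal P_{p_0}$ other than $Q$ and $2T_{p_0}Q$; in both base cases the previous paragraph finishes the proof. For $d\geq 3$ the curve $\overline A$ is singular, with a single singular point, at $p_0$, unibranch and of $\delta$-invariant $\binom{d-1}{2}$. Assuming the theorem for all smaller degrees, it is enough to produce $\Phi\in G$ with $\deg\Phi_*(\overline A)<d$: then $\Phi_*(\overline A)$ is the closure in $\mathbb P^2$ of the closed curve $\Phi|_S(A)\cong\mathbb A^1_k$ in $S$, of strictly smaller degree and the same parity, so the induction hypothesis applies to it.

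Finally, the degree-dropping step. To construct $\Phi$ when $d\geq 3$ I would use the $\mathbb A^1$-fibration $\pi_{p_0}\colon S\to\mathbb A^1$ induced by $\mathcal P_{p_0}$ (general fibre $Q'\cap S$ for $Q'\in\mathcal P_{p_0}\setminus\{Q,2T_{p_0}Q\}$, one multiple fibre over $2T_{p_0}Q$): if $A$ is contained in a fibre then $d\leq 2$, already done; otherwise $\pi_{p_0}|_A\colon\mathbb A^1_k\to\mathbb A^1_k$ is finite, and one applies the subgroup of $G$ preserving $\mathcal P_{p_0}$ and the member $Q$ — a ``de Jonqui\`eres-type'' subgroup containing the explicit generators of $G$ determined elsewhere in the paper — to lower the degree of $A$ over the base, in the spirit of the Abhyankar--Moh reduction. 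Equivalently, pass to the SNC completion $(V,B)$ of $S$ obtained from $(\mathbb P^2,Q)$ by resolving the singularity of $\overline A\cup Q$, so that the strict transform $\tilde A$ meets the tree of rational curves $B$ transversally in one point; the dual graph of $B$ is dictated by the Puiseux data of $\overline A$ at $p_0$, which is rigidly constrained by $I_{p_0}(\overline A,Q)=2d$ together with $\delta_{p_0}=\binom{d-1}{2}$ (forcing, for instance, the first characteristic exponent to be large compared with the multiplicity). Since $\overline{\kappa}(S)=-\infty$, after elementary modifications of $(V,B)$ supported over $Q$ there is a second birational contraction $(V,B)\dashrightarrow(\mathbb P^2,Q'')$ onto a plane-plus-conic pair, hence a ``reconstruction'' element of $G$; choosing it (equivalently, a short sequence of elementary transformations of the $\mathbb P^1$-fibration extending $\pi_{p_0}$) appropriately makes $\tilde A$ blow down to a curve of degree $<d$. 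The hard part will be exactly this last point: reading off the dual graph precisely from the numerical invariants, locating inside $(V,B\cup\tilde A)$ the chain of $(-1)$-curves that realizes a degree-lowering modification — this is where the hypothesis $d\geq 3$, i.e. the very presence of a singularity of $\overline A$, supplies the room needed in $B$ — and checking that the resulting birational self-map of $\mathbb P^2$ has all of its base points and contracted curves on $Q$, so that it genuinely restricts to an automorphism of $S$; everything remains defined over $k$ because all the blow-ups are performed over the $k$-point $p_0$ along the $k$-rational branch of $\overline A$.
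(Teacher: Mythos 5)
Your overall scheme (reduce to a normal form, then use transitivity statements to hit the prescribed $T_{p_0}Q$ or $Q_1$) is reasonable, and the preliminary reductions (one unibranch $k$-rational point at infinity, $I_{p}(\overline{A},Q)=2\deg\overline{A}$, parity read off from $\mathrm{Cl}(S)\simeq\mathbb{Z}_2$, base cases $d=1,2$) are fine. But the proof has a genuine gap exactly where you say ``the hard part will be'': the degree-dropping step for $d\geq 3$ is not an argument, it is a restatement of the theorem. Producing an element of the group $G$ that strictly lowers $\deg\overline{A}$ is the precise analogue of the Abhyankar--Moh reduction, and in this setting it is even less automatic than for $\mathbb{A}^2$: by the structure of $\mathrm{Aut}(S,\rho_{p_0})$ determined in the paper (Proposition \ref{prop:Automorphisms}), the de Jonqui\`eres-type subgroup consists only of maps acting on a fibre coordinate by $Y\mapsto\pm Y+s(t)$ together with a torus, so there is no obvious Euclidean-algorithm mechanism, and the positive-characteristic examples of Proposition \ref{prop:poschar} show that such a degree reduction is a delicate characteristic-zero phenomenon. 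Asserting that the dual graph ``dictated by the Puiseux data'' and the two numerical constraints $I_{p_0}=2d$, $\delta_{p_0}=\binom{d-1}{2}$ will always exhibit a degree-lowering chain of $(-1)$-curves is precisely the content that must be proved; the paper does not argue this way at all, but instead works on the minimal log-resolution of $(\mathbb{P}^2_k,Q\cup\overline{A})$ and uses global tools ($\mathbb{Q}$-acyclicity, almost-minimal models, the logarithmic Bogomolov--Miyaoka--Yau inequality, Miyanishi's theorems on $\mathbb{A}^1$- and $\mathbb{A}^1_*$-fibrations, and the Danilov--Gizatullin invariance of standard boundary chains) to prove in one stroke that $\tilde{A}^2\in\{0,-1\}$ and to contract directly onto the two models, with no induction on degree.

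A secondary point: your transitivity claim in the first paragraph is not correct as stated. The stabilizer of $(Q,p_0)$ acts on the affine parameter of $\mathcal{P}_{p_0}\setminus\{Q\}$ through its torus by multiplication by \emph{squares} of $k^*$ (in the normal form $Q=\{xz+y^2=0\}$, the map $[x:y:z]\mapsto[ax:y:a^{-1}z]$ sends the member $xz+y^2+cx^2$ to the member with parameter $a^{\pm 2}c$), so it is not transitive on the smooth members over a general field of characteristic zero; if you want to land on an arbitrarily prescribed $Q_1$, you need an additional argument (or a weakening of the target statement) rather than an appeal to the torus. This normalization issue is independent of, and much smaller than, the missing degree-reduction argument, but it should not be waved through.
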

Note that the dichotomy depending on the degree of $\overline{A}$
follows from the fact that the divisor classes of $Q$ and $\overline{A}$
either generate $\mathrm{Cl}(\mathbb{P}_{k}^{2})$ when $\deg\overline{A}$
is odd, or generate a proper subgroup of index $2$ when $\deg\overline{A}$
is even, so that $\mathrm{Cl}(S\setminus A)=\left\{ 0\right\} $ or
$\mathbb{Z}_{2}$ according to $\deg\overline{A}$ is odd or even.
\\

Recall that by a theorem of Jung and van der Kulk \cite{Ju42,vdK53},
the automorphism group of the affine plane $\mathbb{A}_{k}^{2}=\mathrm{Spec}(k[x,y])$
over an arbitrary field $k$ is the free product of the group of affine
automorphisms and of the group of automorphisms of the form $(x,y)\mapsto(ax+b,cy+s(x))$,
where $a,c\in k^{*}$, $b\in k$ and $s\in k[t]$, amalgamated over
their intersection. Viewing $\mathbb{A}_{k}^{2}$ as the complement
of the line at infinity $L=\left\{ z=0\right\} $ in $\mathbb{P}_{k}^{2}=\mathrm{Proj}(k[x,y,z])$,
these two subgroups coincide respectively with the restriction to
$\mathbb{A}_{k}^{2}$ of the group $\mathrm{Aut}(\mathbb{P}_{k}^{2},L)$
and with the group $\mathrm{Aut}(\mathbb{A}_{k}^{2},\mathrm{pr}_{1})$
of automorphisms preserving globally the $\mathbb{A}^{1}$-fibration
$\mathrm{pr}_{1}:\mathbb{A}_{k}^{2}\rightarrow\mathbb{A}_{k}^{1}$
induced by the restriction of the pencil of lines through the point
$[0:1:0]$. Our second result consists of an analogous presentation
of the automorphism group of the complement of a smooth $k$-rational
conic in $\mathbb{P}_{k}^{2}$, providing in particular a complete
description of the birational maps $\Phi:\mathbb{P}_{k}^{2}\dashrightarrow\mathbb{P}_{k}^{2}$
which can occur in Theorem \ref{thm:MainThm}. 
\begin{thm}
\label{thm:MainTh2} Let $k$ be a field of arbitrary characteristic
and let $S$ be the complement of a smooth conic $Q\subset\mathbb{P}_{k}^{2}$
with a $k$-rational point $p$. Let $\mathrm{Aut}(S,\rho_{p})$ denote
the subgroup of $\mathrm{Aut}_{k}(S)$ consisting of automorphisms
which preserve globally the $\mathbb{A}^{1}$-fibration $\rho_{p}:S\rightarrow\mathbb{A}_{k}^{1}$
induced by restriction of the pencil $\mathcal{P}_{p}\subset\left|\mathcal{O}_{\mathbb{P}_{k}^{2}}(2)\right|$
generated by $Q$ and twice its tangent line $T_{p}Q$ at $p$. Then
$\mathrm{Aut}_{k}(S)$ is the free product of $\mathrm{Aut}(\mathbb{P}_{k}^{2},Q)|_{S}$
and $\mathrm{Aut}(S,\rho_{p})$ amalgamated along their intersection.
\end{thm}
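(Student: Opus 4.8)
The plan is to derive the decomposition from Bass--Serre theory, by making $\mathrm{Aut}_{k}(S)$ act on a tree built from completions of $S$. I would consider the bipartite graph $\mathcal{T}$ whose \emph{conic vertices} are the completions of $S$ isomorphic to $(\mathbb{P}_{k}^{2},Q)$ and whose \emph{ruled vertices} are the $\mathbb{A}^{1}$-fibrations $\rho\colon S\rightarrow\mathbb{A}_{k}^{1}$, a conic vertex $j\colon S\hookrightarrow\mathbb{P}_{k}^{2}$ being joined to a ruled vertex $\rho$ exactly when $\rho$ coincides on $S$ with the fibration $\rho_{q}$ attached, via $j$, to a $k$-rational point $q$ of the boundary conic. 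Using that up to $\mathrm{Aut}_{k}(S)$ there is a single conic completion and a single $\mathbb{A}^{1}$-fibration on $S$, namely $(\mathbb{P}_{k}^{2},Q)$ and $\rho_{p}$, and that $\mathrm{Aut}(\mathbb{P}_{k}^{2},Q)\cong\mathrm{PGL}_{2}(k)$ acts transitively on $Q(k)=\mathbb{P}^{1}(k)$, the action of $\mathrm{Aut}_{k}(S)$ on $\mathcal{T}$ is transitive on conic vertices, on ruled vertices and on edges. The stabiliser of the conic vertex $S\subset\mathbb{P}_{k}^{2}$ is $\mathrm{Aut}(\mathbb{P}_{k}^{2},Q)|_{S}$ and the stabiliser of the ruled vertex $\rho_{p}$ is $\mathrm{Aut}(S,\rho_{p})$ by definition, while the stabiliser of the joining edge is their intersection: an element of $\mathrm{Aut}(\mathbb{P}_{k}^{2},Q)|_{S}$ preserves $\rho_{p}$ iff its linear extension permutes the members of $\mathcal{P}_{p}$, iff it preserves the base scheme of $\mathcal{P}_{p}$ --- the length-$4$ subscheme supported at $p$ --- iff it fixes $p$, and conversely a linear automorphism fixing $p$ and preserving $Q$ preserves $T_{p}Q$, hence $\mathcal{P}_{p}$. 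By the structure theorem for groups acting on trees it then suffices to prove that $\mathcal{T}$ is connected and simply connected.

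Connectedness of $\mathcal{T}$ is the assertion that $\mathrm{Aut}(\mathbb{P}_{k}^{2},Q)|_{S}$ and $\mathrm{Aut}(S,\rho_{p})$ generate $\mathrm{Aut}_{k}(S)$, and I would establish it by the usual resolution argument for birational surface maps preserving a curve. An element $\varphi\in\mathrm{Aut}_{k}(S)$ extends to a birational self-map of $\mathbb{P}_{k}^{2}$ sending $Q$ to $Q$, all of whose base points, and those of $\varphi^{-1}$, lie on $Q$ or on its successive strict transforms. A minimal resolution of $\varphi$, that is, a smooth projective surface $X$ with birational morphisms $\pi,\pi'\colon X\rightarrow\mathbb{P}_{k}^{2}$ such that $\varphi=\pi'\circ\pi^{-1}$, exhibits $S$ as the complement of an SNC divisor in $X$; contracting successively the boundary $(-1)$-curves that miss $S$ and may be contracted while keeping the boundary SNC, one walks through a chain of minimal SNC completions of $S$, each step an elementary link between relatively minimal models of $S$, hence an edge of $\mathcal{T}$. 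The content of this step is the classification of the minimal SNC completions of $S$ as exactly the conic and the ruled ones, which is where the numerics $Q^{2}=4$ and the fact that the four base points of each pencil $\mathcal{P}_{q}$ are infinitely near $q$ are used; once this is in hand, the chain built from $\pi$ and $\pi'$ is a path in $\mathcal{T}$ from the fixed conic vertex to its $\varphi$-image.

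Simple connectedness is the main obstacle: it is equivalent to the injectivity of the canonical homomorphism $\mathrm{Aut}(\mathbb{P}_{k}^{2},Q)|_{S}\ast_{\mathrm{Aut}(\mathbb{P}_{k}^{2},Q,p)|_{S}}\mathrm{Aut}(S,\rho_{p})\rightarrow\mathrm{Aut}_{k}(S)$, i.e. to showing that a reduced word $\psi_{n}\circ\cdots\circ\psi_{1}$ of length $n\geq2$, with the $\psi_{i}$ alternately in the two factors and none lying in the amalgamated subgroup $\mathrm{Aut}(\mathbb{P}_{k}^{2},Q,p)|_{S}$, does not restrict to the identity of $S$. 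I would attach to every completion met along the path traced by such a word a numerical invariant measuring its distance from a conic completion --- the number of blow-ups needed to reach it, equivalently the degree of the associated plane Cremona transformation, or the self-intersection of the section in a ruled model --- and prove that each link coming from a $\psi_{i}$ outside the amalgamated subgroup strictly increases this invariant, with no cancellation between consecutive links of the two types. A reduced word of length $n\geq2$ then produces a completion at positive distance from every conic completion, hence a Cremona transformation of degree $>1$, in particular not the identity; in the language of the tree, a reduced loop at the fixed vertex would have to return to distance $0$ after a strictly monotone excursion, which is absurd. The delicate point is the local monotonicity statement: for a link outside the amalgamated subgroup one must control which fibre of the ambient $\mathbb{A}^{1}$-fibration, resp. which point of the boundary conic, is modified, and how the dual graph of the boundary together with the configuration of base points evolves; it is here that the specific geometry of $S=\mathbb{P}_{k}^{2}\setminus Q$ --- in particular that $Q$ is itself a member of every pencil $\mathcal{P}_{q}$ --- enters in an essential way.

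Granting these two facts, $\mathcal{T}$ is a tree on which $\mathrm{Aut}_{k}(S)$ acts with fundamental domain a single edge, with vertex stabilisers $\mathrm{Aut}(\mathbb{P}_{k}^{2},Q)|_{S}$ and $\mathrm{Aut}(S,\rho_{p})$ and edge stabiliser their intersection $\mathrm{Aut}(\mathbb{P}_{k}^{2},Q,p)|_{S}$, so Bass--Serre's theorem yields the asserted amalgamated free product. Over a field of characteristic zero the outcome is consistent with Theorem~\ref{thm:MainThm}, whose two orbits of affine lines in $S$ match the two types of vertices of $\mathcal{T}$, the tangent line being the distinguished affine line seen from a ruled vertex and a smooth member of $\mathcal{P}_{p}$ restricting to $S$ the one seen from a conic vertex.
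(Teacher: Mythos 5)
Your Bass--Serre framework is the right general philosophy --- it is essentially the architecture underlying the machinery the paper invokes --- but as written it is a programme rather than a proof, and the gaps sit exactly at the points carrying the mathematical content. First, your construction of $\mathcal{T}$ takes as input that ``up to $\mathrm{Aut}_{k}(S)$ there is a single conic completion and a single $\mathbb{A}^{1}$-fibration on $S$''. The second statement is nontrivial and nowhere justified: in the paper the uniqueness of the $\mathbb{A}^{1}$-fibration up to automorphisms is obtained only as a corollary of Theorem \ref{thm:MainThm}, hence only in characteristic zero, whereas the statement you are proving is over a field of arbitrary characteristic; using it as a black box is either circular or an unproven hypothesis. (You could instead replace $\mathcal{T}$ by the orbit of the single edge $\bigl((\mathbb{P}_{k}^{2},Q),\rho_{p}\bigr)$, but then connectedness of that orbit is literally the generation statement and your transitivity claims buy nothing.) Second, your connectedness argument --- ``contract boundary $(-1)$-curves, each step is an elementary link, hence an edge of $\mathcal{T}$'' --- hides the actual work: the SNC-minimal completions of $S$ are not only the conic ones (standard-chain completions of type $[0,-1,-2,-2,-2]$ and their variants occur), and converting a chain of elementary links between such completions into a path in your bipartite graph is precisely the factorization of the lifted birational map into fibered modifications and reversions between $1$-standard completions, i.e.\ Theorem 3.0.2 of \cite{BD11}, which the paper's proof invokes and which you neither prove nor pinpoint.

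The more serious gap is the one you flag yourself: simple connectedness, i.e.\ that a reduced word $\psi_{n}\circ\cdots\circ\psi_{1}$ of length $n\geq2$ does not restrict to the identity of $S$. You propose a monotone invariant (degree of the associated Cremona map, number of blow-ups, self-intersection of a section) and assert that every link outside the amalgamated subgroup strictly increases it ``with no cancellation'', but you give no proof of this local monotonicity, and it is genuinely delicate: the links joining two conic vertices through a ruled vertex (the reversions of \cite{BD11}) descend to \emph{linear} isomorphisms of pairs $(\mathbb{P}_{k}^{2},Q_{i-1})\stackrel{\sim}{\rightarrow}(\mathbb{P}_{k}^{2},Q_{i})$ and so do not change the degree at all, while the degree growth under composition of fibered modifications depends on how the proper base point of one link relates to the base point of the inverse of the previous one; ruling out cancellation is exactly the content of \cite[Lemma 3.2.4]{BD11} and of \cite[\S 3, Proposition 16]{DuLa}, which is what the paper's proof rests on after reducing to the model conic $q_{0}=xz+y^{2}$ and lifting automorphisms to the $1$-standard completion resolving $\overline{\rho}_{0}$. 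Until you either prove the monotonicity/no-cancellation statement or correctly reduce to these cited results, the two pillars of your tree argument (generation and absence of relations) are not established, so the amalgamated product structure is not yet proved.
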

The scheme of the article is the following: in the first section,
we review standard material on projective completions of smooth quasi-projective
surfaces and certain rational fibrations on them. Section two is devoted
to the proof of Theorem \ref{thm:MainThm}, which proceeds through
the analysis of the structure of the total transform of the divisor
$Q\cup\overline{A}$ in a minimal log-resolution of the pair $(\mathbb{P}_{k}^{2},Q\cup\overline{A})$.
Our argument, inspired by a recent alternative proof of the Abhyankar-Moh
and Lin-Zaidenberg theorems due to Palka \cite{Pal15}, uses techniques
and classification results from the theory of $\mathbb{Q}$-\emph{acyclic
complex surfaces}, that is, normal complex surfaces with trivial reduced
rational homology groups. A standard reference for most of these results
is \cite{MiyBook}, to which we refer the reader for a more complete
picture of the theory of non complete algebraic surfaces. Theorem
\ref{thm:MainTh2} is proved in the third section, in which we give
in addition an explicit set of generators of $\mathrm{Aut}_{k}(S)$
for a suitably chosen model of $Q$ up to projective equivalence.
We also derive from this description examples illustrating that similarly
to the situation for the affine plane, Theorem \ref{thm:MainThm}
does not hold over fields of positive characteristic.\\

\subsection*{Acknowledgments }

Some of the questions addressed in this article emerged during the
workshop ``Birational geometry of surfaces'', held at the Department
of Mathematics of the University of Roma Tor Vergata in January 2016.
The authors would like to thank the organizers of the workshop for
the motivated but relaxed atmosphere of this workshop, as well as
the other members of the ``Afternoon Cremona Club'', Ciro Cilliberto,
Alberto Calabri and Anne Lonjou, for stimulating discussions. 

\section{Preliminaries and notations}

In what follows, the term $k$-variety refers to a geometrically integral
scheme $X$ of finite type over a base field $k$ of arbitrary characteristic.
A morphism of $k$-varieties is a morphism of $k$-schemes. A surface
$V$ is a $k$-variety of dimension $2$, and by a curve on a surface,
we mean a geometrically reduced closed sub-scheme $C\subset V$ of
pure codimension $1$ defined over $k$.

\subsection{SNC divisors and smooth completions}

\indent\newline\indent (i) An \emph{SNC divisor} on a smooth projective
surface $X$ is a curve $B$ on $X$ with smooth irreducible components
and ordinary double points only as singularities. Equivalently, for
every closed point $p\in B$, the local equations of the irreducible
components of $B$ passing through $p$ form a part of regular sequence
in the maximal ideal $\mathfrak{m}_{S,p}$ of the local ring $\mathcal{O}_{S,p}$
of $S$ at $p$. 

An SNC divisor $B$ on $X$ is said to be \emph{SNC-minimal} if there
does not exist any strictly birational projective morphism $\tau:X\rightarrow X'$
onto a smooth projective surface $X'$ with exceptional locus contained
in $B$ such that $\tau_{*}(B)$ is SNC. If $k$ is algebraically
closed, then this property is equivalent to the fact that any $(-1)$-curve
$E$ contained in $B$ is \emph{branching}, i.e. meets at least three
other irreducible components of $B$. 

(ii) A \emph{smooth} \emph{completion} of a smooth quasi-projective
surface $V$ is a pair $(X,B)$ consisting of a smooth projective
surface $X$ and an SNC divisor $B\subset V$ such that $X\setminus B\simeq V$. 

\subsection{\label{subsec:Trees-Chains}Rational trees and rational chains}

\indent\newline\indent (i) A \emph{geometrically rational tree} $B$
on a smooth projective surface $X$ is an SNC divisor whose irreducible
components are geometrically rational curves and such that the dual
graph of the base extension $B_{\overline{k}}$ of $B$ to an algebraic
closure $\overline{k}$ of $k$ is a tree. A \emph{geometrically rational
chain} $B$ is a geometrically rational tree such that the dual graph
of $B_{\overline{k}}$ is a chain. A \emph{rational tree} (resp. \emph{rational
chain}) is a geometrically rational tree (resp. geometrically rational
chain) whose irreducible components are all $k$-rational.

The irreducible components $B_{0},\ldots,B_{r}$ of a rational chain
$B$ can be ordered in such a way that $B_{i}\cdot B_{j}=1$ if $|i-j|=1$
and $0$ otherwise. A rational chain $B$ with such an ordering on
the set of its irreducible components is said to be \emph{oriented}.
The components $B_{0}$ and $B_{r}$ are called respectively the left
and right boundaries of $B$, and we say by extension that an irreducible
component $B_{i}$ of $B$ is on the left of another one $B_{j}$
when $i<j$. The sequence of self-intersections $[B_{0}^{2},\ldots,B_{r}^{2}]$
is called the \emph{type} of the oriented rational chain $B$. An
\emph{oriented} \emph{subchain} of an oriented rational chain $B$
is a rational chain $Z$ whose support is contain in that of $B$.
We say that an oriented rational chain $B$ is composed of subchains
$D_{1},\ldots,D_{s}$, and we write $B=D_{1}\vartriangleright\cdots\vartriangleright D_{s}$,
if the $D_{i}$ are oriented subchains of $B$ whose union is $B$
and the irreducible components of $D_{i}$ precede those of $D_{j}$
for $i<j$. 

(ii) An oriented rational chain $F\vartriangleright C\vartriangleright E$
where $F$ and $C$ are irreducible and $E$ is an oriented subchain,
possibly empty, is said to be $m$-\emph{standard, $m\in\mathbb{Z}$,}
if it is of type $[0,-m]$ or $[0,-m,-a_{1},\ldots,-a_{r}]$ where
$a_{i}\geq2$ for every $i=1,\ldots,r$. It is an elementary exercise
(see e.g. \cite{Gi-Da1}) to check that every chain $B$ with non
negative definite intersection matrix can be transformed by a sequence
of blow-ups and blow-downs whose centers are contained in the successive
total transforms of $B$ either into a $0$-curve, or into a chain
of type $[0,0,0]$, or into an $m$-standard chain for every $m\in\mathbb{Z}$. 

(iii) In particular, every affine surface $S$ non isomorphic to $\mathbb{A}_{k}^{1}\times(\mathbb{A}_{k}^{1}\setminus\{0\})$
admitting a smooth completion $(X_{0},B_{0})$ for which $B_{0}$
is a rational chain, admits a smooth completion $(X,B)$ for which
$B=F\vartriangleright C\vartriangleright E$ is $m$-standard chain
(see e.g. \cite[Lemma 2.7]{Du05}). Furthermore, it follows from a
result of Danilov and Gizatullin \cite[Corollary 2]{Gi-Da1} that
the type of the subchain $E$ is an invariant of $S$, in the sense
that if $(X',B')$ is another smooth completion of $S$ by an $m'$-standard
chain $B'=F'\vartriangleright C'\vartriangleright E'$ then the type
of $E'$ is either equal to that of $E$ or to that of $E$ equipped
with the reversed orientation. For instance, if $S=\mathbb{P}_{k}^{2}\setminus Q$
is the complement of a smooth $k$-rational conic $Q$ in $\mathbb{P}^{2}$,
then for every smooth completion $(X,B)$ of $S$ by a $m$-standard
chain $B=F\vartriangleright C\vartriangleright E$, the subchain $E$
has type $[-2,-2,-2]$. 

\subsection{\label{subsec:Fibrations}Recollection on $\mathbb{P}^{1}$, $\mathbb{A}^{1}$
and $\mathbb{A}_{*}^{1}$ -fibrations }

We review some basic properties of $\mathbb{P}^{1}$-fibrations on
smooth projective surfaces and their restrictions to certain of their
open subsets, see e.g \cite[Chapter 3]{MiyBook} for more details. 

(i) By a $\mathbb{P}^{1}$-fibration on a smooth projective surface
$X$, we mean a surjective morphism $\overline{\rho}:X\rightarrow\overline{Z}$
onto a smooth projective curve $\overline{Z}$ whose generic fiber
is isomorphic to the projective line over the function field of $\overline{Z}$.
It is well known that every $\mathbb{P}^{1}$-fibration $\overline{\rho}:X\rightarrow\overline{Z}$
is obtained from a Zariski locally trivial $\mathbb{P}^{1}$-bundle
over $\overline{Z}$ by a finite sequence of blow-ups of points. In
particular, every such $\mathbb{P}^{1}$-fibration has a section,
and its singular fibers are supported by geometrically rational trees
on $X$. If $X$ is $k$-rational, then it follows from the Riemann-Roch
Theorem, that for every smooth $k$-rational curve $F$ with self-intersection
$0$, the complete linear system $|F|$ defines a $\mathbb{P}^{1}$-fibration
$\overline{\rho}_{|F|}:X\rightarrow\mathbb{P}_{k}^{1}$ having $F$
as a smooth fiber. 

(ii) An $\mathbb{A}^{1}$-fibration on a smooth quasi-projective surface
$V$ is a surjective morphism $\rho:V\rightarrow Z$ onto a smooth
curve $Z$ whose generic fiber is isomorphic to the affine line over
the function field of $Z$. Every $\mathbb{A}^{1}$-fibration is the
restriction of a $\mathbb{P}^{1}$-fibration $\overline{\rho}:X\rightarrow\overline{Z}$
over the smooth projective model $\overline{Z}$ of $Z$ on a smooth
completion $(X,B)$ of $V$. Furthermore, one can always find such
a smooth completion for which $B$ has the form $B=\bigcup_{z\in\overline{Z}\setminus Z}F_{z}\cup C\cup\bigcup_{z\in Z}H_{z}$
where, $F_{z}=\overline{\rho}^{-1}(z)\simeq\mathbb{P}_{\kappa(z)}^{1}$
for every $z\in\overline{Z}\setminus Z$, $C$ is a section of $\overline{\rho}$,
and where for every $z\in Z$, $H_{z}$ is an SNC-minimal geometrically
rational subtree of $\overline{\rho}^{-1}(z)$, possibly empty, the
support of the fiber $\overline{\rho}^{-1}(z)$ being equal to the
union of $H_{z}$ and of the closure in $X$ of the support of $\rho^{-1}(z)$. 

If in addition $V$ is affine, then every nonempty $H_{z}$ contains
a $\kappa(z)$-rational irreducible component intersecting $C$, the
closure in $X$ of every irreducible component of $\rho^{-1}(z)$
is isomorphic to the projective line over a finite extension $\kappa'$
of $\kappa(z)$, and it intersects $H_{z}$ transversally in a unique
$\kappa'$-rational point. A scheme theoretic closed fiber $\rho^{-1}(z)$
of $\rho:V\rightarrow Z$ which is not isomorphic to $\mathbb{A}_{\kappa(z)}^{1}$
is said to be \emph{degenerate}. 

(iii) An $\mathbb{A}_{*}^{1}$-fibration on smooth quasi-projective
surface $V$ is a surjective morphism $\xi:V\rightarrow Z$ onto a
smooth affine curve $Z$ whose geometric generic fiber is isomorphic
to the punctured affine line $\mathbb{A}_{*}^{1}=\mathbb{A}^{1}\setminus\{0\}$
over an algebraic closure of the function field of $Z$. We say that
$\xi$ is twisted if the generic fiber of $\xi$ is a nontrivial form
of $\mathbb{A}_{*}^{1}$ over the function field of $Z$, and untwisted
otherwise.

\section{Affine lines in $\mathbb{P}^{2}\setminus Q$ }

This section is devoted to the proof of Theorem \ref{thm:MainThm}.
A field $k$ of characteristic zero being fixed throughout, we let
$S$ be the complement of a smooth conic $Q$ in $\mathbb{P}_{k}^{2}$,
we let $A\subset S$ be a closed curve isomorphic to $\mathbb{A}_{k}^{1}$
and we let $\overline{A}$ be its closure in $\mathbb{P}^{2}$. Let
$\nu:C\rightarrow\overline{A}$ be the normalization of $\overline{A}$.
Since $A\simeq\mathbb{A}_{k}^{1}$, $C$ is isomorphic to $\mathbb{P}_{k}^{1}$
and $C\setminus\nu^{-1}(A)$ consists of a unique point. So $\overline{A}\setminus A$
consists of a unique point $p=Q\cap\overline{A}$, which is thus necessarily
$k$-rational, at which $\overline{A}$ has a unique local analytic
branch. In particular, $Q$ is $k$-rational. Since $\mathrm{Aut}(\mathbb{P}_{k}^{2})$
acts transitively on the set of pairs $(Q,p)$ consisting of a smooth
$k$-rational conic and a $k$-rational point on it and since the
stabilizer $\mathrm{Aut}(\mathbb{P}_{k}^{2},Q,p)\subset\mathrm{Aut}(\mathbb{P}_{k}^{2})$
of a given pair $(Q,p)$ acts transitively on $Q$, we are reduced
to establish the following:
\begin{prop}
\label{prop:Intermediate-Formulation} There exists a smooth $k$-rational
conic $Q'\subset\mathbb{P}_{k}^{2}$ and a birational map $\Psi:\mathbb{P}_{k}^{2}\dashrightarrow\mathbb{P}_{k}^{2}$
restricting to an isomorphism $\psi:S=\mathbb{P}_{k}^{2}\setminus Q\stackrel{\sim}{\rightarrow}\mathbb{P}_{k}^{2}\setminus Q'$
mapping $\overline{A}$ either to a smooth $k$-rational conic intersecting
$Q'$ in a unique $k$-rational point $p'$ or to the tangent line
$T_{p'}Q'$ of $Q'$ at a $k$-rational point $p'$. 
\end{prop}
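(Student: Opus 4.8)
The plan is to study the pair $(\mathbb{P}_k^2, Q\cup\overline{A})$ through a minimal log-resolution $\pi:(X,B)\to(\mathbb{P}_k^2,Q\cup\overline{A})$, where $B$ is the reduced total transform of $Q\cup\overline{A}$, so that $(X,B)$ is a smooth completion of the affine surface $S\setminus A$. Since $A\simeq\mathbb{A}_k^1$ meets $Q$ in the single $k$-rational point $p$ with one analytic branch, the geometry of $B$ is tightly constrained: over $\overline{k}$, the surface $S_{\overline{k}}\setminus A_{\overline{k}}$ is $\mathbb{Q}$-acyclic (its divisor class group and homology are computed from $\mathrm{Cl}(S)=\mathbb{Z}_2$ and the fact that removing $A$ either trivializes or halves it, as noted after Theorem~\ref{thm:MainThm}), and $\overline{\kappa}(S\setminus A)=-\infty$ because it is dominated by the $\mathbb{A}^1$-uniruled $S$. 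The first main step is therefore to invoke the structure theory of $\mathbb{Q}$-acyclic surfaces of log-Kodaira dimension $-\infty$ (following \cite{MiyBook} and the strategy of Palka \cite{Pal15}): such a surface carries an $\mathbb{A}^1$-fibration, hence after further blowups $(X,B)$ can be arranged so that $B$ contains a $0$-curve $F$ defining a $\mathbb{P}^1$-fibration $\overline{\rho}:X\to\mathbb{P}^1$ whose restriction gives an $\mathbb{A}^1$-fibration on $S\setminus A$, and $B$ acquires the standard shape $B=F\vartriangleright C\vartriangleright E\,\cup\,(\text{fiber components})$ described in \ref{subsec:Fibrations}.

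The second step is to analyze the possible degenerate fibers and the relative position of the strict transforms $\tilde{Q}$ and $\tilde{A}$ inside this fibered model. Because $\tilde{Q}$ and $\tilde{A}$ are the only two "curves at infinity" for $S\setminus A$ coming from actual curves in $\mathbb{P}^2$ (as opposed to exceptional components of $\pi$), and because $A\cdot Q$ happens at a single point with a single branch, the combinatorial type of $B$ together with the constraint that contracting $\tilde{A}$-side exceptional curves must return us to $\mathbb{P}^2$ with $Q$ smooth, forces $B$ to be one of a short explicit list of chains or near-chains. Here I would use the invariant from \ref{subsec:Trees-Chains}: for any $m$-standard completion of $S=\mathbb{P}^2\setminus Q$ the tail $E$ has type $[-2,-2,-2]$, which pins down the contribution of $Q$ to $B$; the remaining freedom lies in how $\overline{A}$ and the exceptional divisor of $\pi$ attach. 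Running the Hamburger--Noether / peeling algorithm on the cuspidal behavior of $\overline{A}$ at $p$, together with the numerical constraints ($\mathbb{Q}$-acyclicity fixes the number of components and the determinant of the intersection matrix of $B$), should leave exactly two shapes: one in which the strict transform of $\overline{A}$ meets $E$ the way a conic osculating $Q$ to order $4$ at $p$ does, and one in which it meets $E$ the way the tangent line $T_pQ$ does.

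The third step is to turn each of the two admissible shapes of $B$ into the asserted birational map $\Psi$. Having a $\mathbb{P}^1$-fibration $\overline{\rho}$ on $X$ adapted to $B$, I would contract the exceptional components of $\pi$ that lie over $p$ and do not come from $Q$, then perform elementary transformations in the fibers of the induced $\mathbb{P}^1$-fibration on $\mathbb{P}^2$ to move the configuration to a standard model; this realizes $\psi:S\xrightarrow{\sim}\mathbb{P}^2\setminus Q'$ sending $\overline{A}$ either onto a smooth conic meeting $Q'$ at one rational point or onto $T_{p'}Q'$. The dichotomy is governed by $\deg\overline{A}\bmod 2$, equivalently by whether $\mathrm{Cl}(S\setminus A)$ is $0$ or $\mathbb{Z}_2$, an invariant that $\psi$ must respect, so the two cases cannot collapse into one and both genuinely occur (the tangent line and any osculating conic give models).

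The main obstacle I expect is the second step: showing that the purely combinatorial/numerical constraints on $B$ — $\mathbb{Q}$-acyclicity, $\overline{\kappa}=-\infty$, the fixed $[-2,-2,-2]$ tail coming from $Q$, SNC-minimality, and the single-branch condition at $p$ — really do cut the list of possible dual graphs down to exactly two. This is a delicate bookkeeping argument in the style of Palka's treatment of Abhyankar--Moh and Lin--Zaidenberg: one must control how many times the fibration $\overline{\rho}$ can degenerate, rule out twisted $\mathbb{A}_*^1$-subfibrations and spurious multiple fibers, and check that no alternative attachment of $\tilde{A}$ to the $Q$-tail survives the minimality and acyclicity tests. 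The characteristic-zero hypothesis enters precisely here (and only here), through the absence of wild phenomena in the resolution of the cusp of $\overline{A}$ and in the behavior of $\mathbb{A}^1$-fibrations, which is why the theorem fails in positive characteristic.
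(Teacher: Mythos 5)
Your opening step rests on two claims that are false, and they are precisely the claims that were supposed to power the rest of the argument. First, $S\setminus A$ is not $\mathbb{Q}$-acyclic: over $\mathbb{C}$ one has $H_{1}(\mathbb{P}^{2}\setminus(Q\cup\overline{A});\mathbb{Z})\simeq\mathbb{Z}^{2}/\langle(2,\deg\overline{A})\rangle$, which has rank one, so $b_{1}(S\setminus A)=1$ and $\chi(S\setminus A)=0$; the classification theory of $\mathbb{Q}$-acyclic surfaces therefore cannot be applied to $S\setminus A$ at all. In the paper, $\mathbb{Q}$-acyclicity is only ever used for $S$ itself, and the auxiliary open surface that is minimalized in Lemma \ref{lem:Asquare} is $V=X\setminus(D_{1}\cup\tilde{A}\cup D_{2})$, which differs from $S\setminus A$ by a punctured piece of $E$ and has $\chi(V)=-1$, not $1$. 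Second, the inference that $\overline{\kappa}(S\setminus A)=-\infty$ ``because it is dominated by the $\mathbb{A}^{1}$-uniruled $S$'' goes the wrong way: for an open subset $U$ of a surface, $\overline{\kappa}(U)\geq\overline{\kappa}(S)$, so uniruledness of $S$ gives no upper bound (compare $\mathbb{A}^{2}$ versus the complement of a smooth conic in $\mathbb{A}^{2}$, which has nonnegative log Kodaira dimension). Deciding what happens when the relevant log Kodaira dimension is $\geq0$ is the heart of the problem: the paper's Lemma \ref{lem:Asquare} spends most of its length excluding $\overline{\kappa}(X,\Delta)\geq0$ by passing to an almost-minimal model and invoking Langer's logarithmic Bogomolov--Miyaoka--Yau inequality, together with the $[-2,-2,-2]$ standard-chain invariant of \S\ref{subsec:Trees-Chains}; and even the $\overline{\kappa}=-\infty$ branch requires a separate fibration argument, with Lemma \ref{lem:Negative} (proved via Fujita's affineness criterion and Goodman's ampleness theorem) as input. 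None of this is present, or replaceable by the structure theory you cite, in your outline.

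Your second step, the ``bookkeeping'' that is supposed to cut the dual graphs down to two shapes, is exactly where the content lies and is left as a hope; moreover it is set up on the two false premises above, so it cannot be repaired by more careful combinatorics alone. Note also that the paper does not classify resolutions of the cusp of $\overline{A}$ at $p$: it contracts the $Q$-side of the resolution to reach a model $(X,D)$ with $D=\tilde{A}\cup D_{1}\cup E\cup D_{2}$, proves that $\tilde{A}^{2}\in\{0,-1\}$ (Lemmas \ref{lem:Negative}, \ref{lem:Asquare} and Case 1 of Lemma \ref{prop:twoCases}), and only then produces $\Psi$ in each case by explicit $\mathbb{P}^{1}$-fibrations, elementary transformations and contraction to $\mathbb{P}^{2}$ --- a step your third paragraph gestures at correctly, but which cannot start without the dichotomy on $\tilde{A}^{2}$. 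Finally, the characteristic-zero hypothesis is used not only for cusp resolution but through the reduction to $k=\mathbb{C}$ needed for the topological Euler characteristic estimates, the BMY inequality, and Miyanishi's theorems on $\mathbb{A}^{1}$- and $\mathbb{A}_{*}^{1}$-fibrations of $\mathbb{Q}$-acyclic surfaces.
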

The proof of this proposition if given in $\S$ \ref{subsec:LogRes}
and $\S$ \ref{subsec:ProofThm} below.

\subsection{\label{subsec:LogRes} Log-resolution setup and preliminary observations }

Let $\sigma:(X',D')\rightarrow(\mathbb{P}_{k}^{2},Q\cup\overline{A})$
be the minimal log-resolution of the pair $(\mathbb{P}_{k}^{2},Q\cup\overline{A})$.
Recall that by definition, $X'$ is smooth, $\sigma$ is a projective
birational morphism restricting to an isomorphism over $\mathbb{P}_{k}^{2}\setminus Q\cup\overline{A}$,
and minimal for the property that $D'=\sigma^{-1}(Q\cup\overline{A})_{\mathrm{red}}$
is an SNC divisor. Note that if $p=Q\cap\overline{A}$ is a singular
point of $\overline{A}$ then $\sigma$ is in particular a log-resolution
of the singularity of $\overline{A}$. Since $\overline{A}\cdot Q\geq2$
and $p$ is $k$-rational, $\sigma$ consists of the blow-up of $p$
followed by a sequence of blow-ups of $k$-rational points supported
on the successive total transforms of $Q\cup\overline{A}$. Since
$\overline{A}$ has a unique analytic branch at $p$, $D'$ is a rational
tree of the form $\overline{A}\cup D{}_{1}'\cup E'\cup D_{2}'$, where
$D_{1}'\cup E'\cup D_{2}'=\sigma^{-1}(Q)_{\mathrm{red}}$ consists
of a rational tree $D_{1}'$ containing the proper transform of $Q$,
a nonempty SNC-minimal rational chain $D_{2}'$ with negative definite
 intersection matrix and a $(-1)$-curve $E'$ such that $D_{1}'\cap E'$,
$D_{2}'\cap E'$ and $\overline{A}'\cap E'$ all consist of a unique
point.

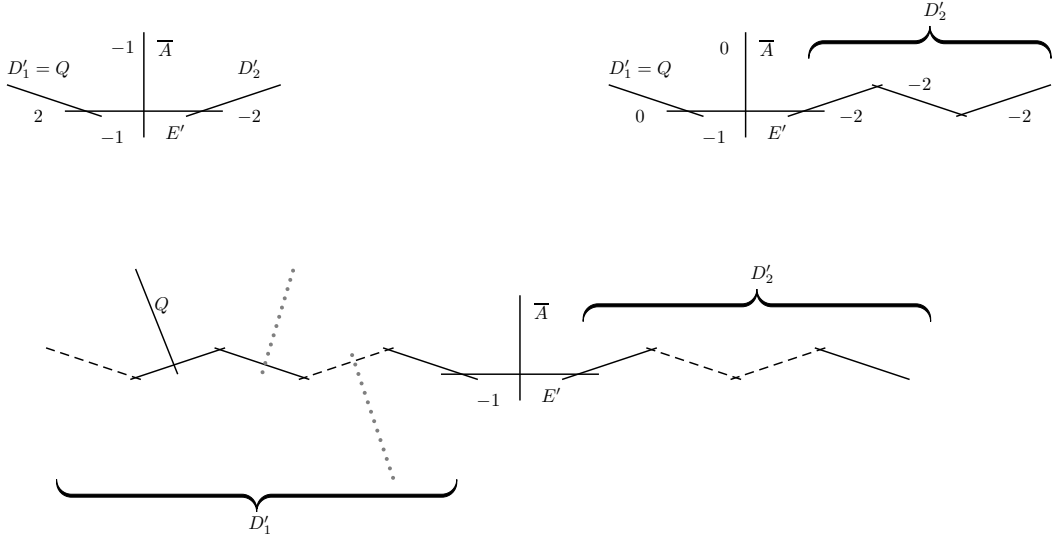
\begin{figure}[!htb]
\psset{linewidth=0.8pt}
\begin{pspicture}(-2,2)(10,-5.5)

\rput(-1,0){
\psscalebox{0.7 0.7}{

\psline(0,-0.5)(0,1.5)
\psline(-1.5,0)(1.5,0)
\psline(0.8,-0.1)(2.6,0.5)
\psline(-0.8,-0.1)(-2.6,0.5)
\rput(0.6,-0.4){$E'$}
\rput(-0.6,-0.5){$-1$}
\rput(0.4,1.2){$\overline{A}$}
\rput(-0.4,1.2){$-1$}
\rput(2,0.8){$D_2'$}
\rput(2,-0.1){$-2$}
\rput(-2,0.8){$D_1'=Q$}
\rput(-2,-0.1){$2$}
}
}
\rput(7,0){
\psscalebox{0.7 0.7}{
\psline(-1.5,0)(1.5,0)
\psline(0,-0.5)(0,1.5)
\psline(-0.8,-0.1)(-2.6,0.5)
\psline(0.8,-0.1)(2.6,0.5)
\psline(2.4,0.5)(4.2,-0.1)
\psline(4,-0.1)(5.8,0.5)
\rput(0.6,-0.4){$E'$}
\rput(-0.6,-0.5){$-1$}
\rput(0.4,1.2){$\overline{A}$}
\rput(-0.4,1.2){$0$}
\rput(2,-0.1){$-2$}
\rput(3.3,0.5){$-2$}
\rput(5.2,-0.1){$-2$}
\psbrace[linewidth=0.01,rot=-90, nodesepB=-3pt, nodesepA=-3.5pt](5.8,1)(1.2,1){$D_2'$}
\rput(-2,0.8){$D_1'=Q$}
\rput(-2,-0.1){$0$}
}
}
\rput(4,-3.5){
\psscalebox{0.7 0.7}{
\psline(-1.5,0)(1.5,0)
\psline(0,-0.5)(0,1.5)
\psline(0.8,-0.1)(2.6,0.5)
\psline[linestyle=dashed](2.4,0.5)(4.2,-0.1)
\psline[linestyle=dashed](4,-0.1)(5.8,0.5)
\psline(5.6,0.5)(7.4,-0.1)
\psline(-0.8,-0.1)(-2.6,0.5)
\psline[linestyle=dashed](-2.4,0.5)(-4.2,-0.1)
\psline(-4.0,-0.1)(-5.8,0.5)
\psline(-5.6,0.5)(-7.4,-0.1)
\psline[linestyle=dashed](-7.2,-0.1)(-9.0,0.5)
\psline[linestyle=dotted, linewidth=2.2pt, linecolor=gray](-4.9,0)(-4.3,2) 
\psline[linestyle=dotted, linewidth=2.2pt, linecolor=gray](-3.2,0.4)(-2.4,-2) 
\psline(-6.5,0)(-7.3,2)
\psbrace[linewidth=0.01,rot=-90, nodesepB=-3pt, nodesepA=-3.5pt](7.8,1)(1.2,1){$D_2'$}
\psbrace[linewidth=0.01,rot=90, nodesepB=12pt, nodesepA=-4.5pt](-8.8,-2)(-1.2,-2){$D_1'$}
\rput(-6.8,1.3){$Q$}
\rput(0.6,-0.4){$E'$}
\rput(-0.6,-0.5){$-1$}
\rput(0.4,1.2){$\overline{A}$}
}
}
\end{pspicture}
\caption{Structure of the divisor $D'$ in the case where $\overline{A}$ is a line, a rational conic, and a general curve respectively. The gray dotted lines represent rational subtrees of $D_1'$.}
\label{fig:reso1}
\end{figure}

The proper transform of $Q$ is the unique possible non-branching
$(-1)$-curve in $D_{1}'$ and we let $\sigma':(X',D')\rightarrow(X,D)$
be the map consisting of the contraction all successive non-branching
$(-1)$-curves in $D_{1}'$ . The image of $D'$ by $\sigma'$ is
again a rational tree $D=\tilde{A}\cup D_{1}\cup E\cup D_{2}$ where
$\tilde{A}=\sigma_{*}(\overline{A})\simeq\mathbb{P}_{k}^{1}$, $D_{1}=\sigma'_{*}(D_{1})$
is an SNC-minimal rational tree, $D_{2}=\sigma_{*}(D_{2}')$ is a
rational chain isomorphic to $D_{2}$, and $E=\sigma'_{*}(E')$. By
construction $S=\mathbb{P}_{k}^{2}\setminus Q$ is isomorphic to $X\setminus(D_{1}\cup E\cup D_{2})$. 

We now establish two crucial auxiliary results which will serve for
the analysis of the case where the self-intersection of the proper
transform $\tilde{A}$ of $\overline{A}$ in $X$ is negative. 
\begin{lem}
\label{lem:Negative} If $\tilde{A}^{2}<0$ then the following hold: 

a) The rational tree $D_{1}$ is not empty, and its intersection matrix
is not negative definite. 

b) Every closed irreducible curve $C$ in $X$ distinct from $\tilde{A}$
or an irreducible component of $D_{2}$ intersects $D_{1}$.
\end{lem}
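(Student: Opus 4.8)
The plan is to exploit the fact that $S = X \setminus (D_1 \cup E \cup D_2) \simeq \mathbb{P}^2_k \setminus Q$ has divisor class group $\mathbb{Z}_2$ and, after base change to $\overline{k}$, is a $\mathbb{Q}$-acyclic surface, together with the constraint that $\tilde A \subset S$ is a closed curve isomorphic to $\mathbb{A}^1_k$ with $\tilde A^2 < 0$. First I would record the numerical picture: in $X$ the divisor $D = \tilde A \cup D_1 \cup E \cup D_2$ is a rational tree supporting the boundary of $S$ together with the curve $\tilde A$, and $X \setminus (D_1 \cup E \cup D_2 \cup \tilde A) \simeq S \setminus \tilde A = \mathbb{P}^2_k \setminus (Q \cup \overline A)$, which by the remark following Theorem~\ref{thm:MainThm} has class group $\{0\}$ or $\mathbb{Z}_2$ according to the parity of $\deg\overline A$; in either case it is finite. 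Hence the irreducible components of $D_1 \cup E \cup D_2 \cup \tilde A$ span $\mathrm{Pic}(X)_{\mathbb{Q}}$, so the total number of components equals $\rho(X) = b_2(X)$, and their intersection matrix has rank $\rho(X)$; in particular $D_1 \cup E \cup D_2 \cup \tilde A$ supports an ample-modulo-torsion class and its intersection matrix has exactly one positive eigenvalue (by the Hodge index theorem).

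For part (a), suppose first that $D_1$ is empty. Then $D = \tilde A \cup E \cup D_2$ with $\tilde A^2 < 0$, $E^2 = -1$, and $D_2$ negative definite by construction; since $E$ meets each of $\tilde A$, $D_2$ in one point and $D$ is a tree, a direct computation (or the observation that contracting $E$ and then the image of $\tilde A$ leaves a negative definite chain) shows the intersection matrix of $D$ is negative definite, contradicting the previous paragraph. So $D_1 \neq \emptyset$. Now suppose the intersection matrix of $D_1$ is negative definite. I would then show the whole matrix of $D = \tilde A \cup D_1 \cup E \cup D_2$ is negative definite: contract $E$ (a $(-1)$-curve meeting $D_1$, $D_2$, $\tilde A$ each once), which turns $D$ into a tree with $D_1'$, $D_2'$ still negative definite and a new component $\tilde A'$ with $(\tilde A')^2 = \tilde A^2 + 1$ meeting $D_1'$ and $D_2'$; iterate on the image of $\tilde A$ as long as its self-intersection stays $\le -1$, the point being that each time we meet $D_1 \cup D_2$ in at most two points lying in distinct negative-definite pieces, so the Zariski-type computation keeps the matrix negative definite. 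This again contradicts the existence of a positive eigenvalue, proving (a).

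For part (b), let $C \subset X$ be a closed irreducible curve, $C \neq \tilde A$ and $C$ not a component of $D_2$, and suppose $C \cap D_1 = \emptyset$. Then $C$ is contained in $X \setminus D_1$. I would use the morphism contracting $E$ and then $D_1$: since $D_1$ is SNC-minimal and, by part (a), not negative definite while still a proper rational tree, one can run the standard analysis—or more simply, observe that $\mathbb{P}^2_k \setminus Q$ contains no complete curve, so $C$ must meet the boundary $D_1 \cup E \cup D_2$; if it avoids $D_1$ it meets $E \cup D_2$, and I would push this forward under $\sigma': X \to$ (the model contracting the non-branching $(-1)$-curves of $D_1$) back to $\mathbb{P}^2_k$: the image of $C$ is a curve in $\mathbb{P}^2_k$ disjoint from $Q$ except possibly at $p = Q \cap \overline A$, forcing, via the class group computation, that the class of the image is a multiple of $[Q]$ that meets $Q$ only at $p$, hence (being effective and of bounded degree) that $C$ is a component of $E \cup D_2$ or equals $\tilde A$—the only such curves—contradiction. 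The main obstacle is the bookkeeping in part (a): carrying out the successive contractions of the images of $\tilde A$ and verifying that negativity of the intersection form is preserved at each step requires a careful Zariski-decomposition-style argument rather than a one-line appeal, and one must make sure $\tilde A$ never becomes a $0$-curve before $D_1 \cup D_2$ has been dismantled, which is exactly where the hypothesis $\tilde A^2 < 0$ and the tree structure are used.
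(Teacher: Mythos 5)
Your reduction to signature considerations cannot prove part (a), and this is a structural problem rather than a fixable computation. Since $S\simeq X\setminus(D_{1}\cup E\cup D_{2})$ is $\mathbb{Q}$-acyclic (after passing to $\mathbb{C}$), the components of $D_{1}\cup E\cup D_{2}$ alone already form a basis of $\mathrm{Cl}(X)\otimes\mathbb{Q}$ (so your count ``number of components of $D$ equals $\rho(X)$'' is off by one, though that is minor). The real issue is that the one positive eigenvalue guaranteed by the Hodge index theorem may be carried by $E$: you assume $E^{2}=-1$ in $X$, but only $E'^{2}=-1$ holds on $X'$, and the contraction $\sigma'$ of non-branching $(-1)$-curves of $D_{1}'$ can raise $E^{2}$ arbitrarily (the paper's Case 2, Subcase 2 of Lemma \ref{prop:twoCases} treats $E^{2}\geq0$ as a genuine possibility; in fact if $D_{1}=\emptyset$ then all of $D_{1}'$ was contracted and $E^{2}\geq0$ is forced). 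Consequently ``$D_{1}$ negative definite (or empty) $\Rightarrow$ the matrix of $D$ is negative definite'' is false and cannot be true: the matrix of $D$ always has exactly one positive eigenvalue, whatever $D_{1}$ is, so no contradiction can be extracted from it. Even granting $E^{2}=-1$, your contraction bookkeeping fails, e.g. for $\tilde{A}^{2}=-1$ one has $(\tilde{A}+E)^{2}=0$, so the configuration is at best semi-definite. The missing idea, which is how the paper argues, is to localize the positivity on $D_{1}$ itself: use $\tilde{A}^{2}\neq0$ to replace $E$ by $\tilde{A}$ in the basis, contract the negative definite divisor $\tilde{A}\cup D_{2}$ to a normal surface $\tilde{X}$, observe that $\tilde{X}\setminus\tau(D_{1})$ is a normal $\mathbb{Q}$-acyclic surface, hence affine by Fujita, and then invoke Goodman's theorem to conclude that $\tau(D_{1})$ supports an ample divisor; this yields simultaneously that $D_{1}$ is nonempty, not negative definite, and meets every curve not contracted by $\tau$.

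Your part (b) is essentially circular. From ``$C$ avoids $D_{1}$, hence meets $E\cup D_{2}$'' you pass via an unspecified class-group argument to ``the image of $C$ in $\mathbb{P}_{k}^{2}$ meets $Q$ only at $p$, hence $C$ is a component of $E\cup D_{2}$ or equals $\tilde{A}$''. But there are many curves in $\mathbb{P}_{k}^{2}$ meeting $Q$ only at $p$ (all members of the pencil $\mathcal{P}_{p}$, the tangent line $T_{p}Q$, and $\overline{A}$ itself), and whether their proper transforms in $X$ meet $D_{1}$ or only $E\cup D_{2}$ is exactly what is to be proved; nothing in your sketch rules out the second possibility. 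A telling symptom is that your argument for (b) never uses the hypothesis $\tilde{A}^{2}<0$, whereas it is essential: when $\tilde{A}^{2}=0$ the linear system $|\tilde{A}|$ defines a $\mathbb{P}^{1}$-fibration whose fiber supported on $D_{1}$ is avoided by all the other fibers, so (b) is simply false without that hypothesis. In the paper the hypothesis enters precisely through the contractibility of $\tilde{A}\cup D_{2}$, and (b) then follows from the ampleness of (a divisor supported on) $\tau(D_{1})$ together with the fact that $\tau$ is an isomorphism near $D_{1}$.
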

\begin{proof}
These properties are invariant under extension and restriction of
the base field $k$. So by first replacing $k$ by a subfield $k_{0}\subset k$
of finite transcendence degree over $\mathbb{Q}$ over which the projective
surface $X$, the divisor $D=\tilde{A}\cup D_{1}\cup E\cup D_{2}$
and the curve $C$ are defined and then taking base extension by an
embedding $k_{0}\hookrightarrow\mathbb{C}$, we may assume from the
beginning that $k=\mathbb{C}$. Then since $S\simeq X\setminus(D_{1}\cup E\cup D_{2})\simeq\mathbb{P}^{2}\setminus Q$
is $\mathbb{Q}$-acyclic, the classes $E$ and of the irreducible
components of $D_{1}$ and $D_{2}$ form a basis of $\mathrm{Cl}(X)\otimes_{\mathbb{Z}}\mathbb{Q}$
\cite[Lemma 4.2.1]{MiyBook}. In $\mathrm{Cl}(X)\otimes_{\mathbb{Z}}\mathbb{Q}$,
we may thus write $\tilde{A}\equiv(\tilde{A}^{2})E+R$, where $R$
is the class of a $\mathbb{Q}$-divisor supported on $D_{1}\cup D_{2}$
and since $\tilde{A}^{2}\neq0$, it follows that the classes of $\tilde{A}$
and of the irreducible components of $D_{1}$ and $D_{2}$ also form
a basis of $\mathrm{Cl}(X)\otimes_{\mathbb{Z}}\mathbb{Q}$. Let $\tau:X\rightarrow\tilde{X}$
be the birational morphism onto a normal surface with at most cyclic
quotient singularities obtained by contracting $\tilde{A}$ and the
negative definite rational chain $D_{2}$. The irreducible components
of $\tau(D_{1})$ then form a basis of $\mathrm{Cl}(\tilde{X})\otimes_{\mathbb{Z}}\mathbb{Q}$,
and since $\tau(D_{1})$ is connected, it follows that $\tilde{X}\setminus\tau(D_{1})$
is a normal $\mathbb{Q}$-acyclic surface, hence an affine surface
by virtue \cite{Fu82}. So by \cite{Go69}, $\tau(D_{1})$ is the
support of an ample divisor on $\tilde{X}$, in particular, $\tau(D_{1})$
is not empty, its intersection matrix is not negative definite and
it intersects every proper curve in $\tilde{X}$. Because $D_{1}$
is disjoint from $D_{2}$ and $\tilde{A}$, $\tau$ restricts to an
isomorphism in an open neighborhood of $D_{1}$, and so the assertion
follows. 
\end{proof}
\begin{lem}
\label{lem:Asquare}If $E^{2}=-1$ and $\tilde{A}^{2}<0$ then $\tilde{A}^{2}=-1$. 
\end{lem}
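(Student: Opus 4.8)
The plan is to argue by contradiction, supposing $E^{2}=-1$, $\tilde{A}^{2}<0$, and $\tilde{A}^{2}\leq-2$, and then to contract $E$ together with its neighbors to descend to a smooth completion of $S$ by a chain of the kind discussed in $\S$\ref{subsec:Trees-Chains}, deriving a contradiction with the invariant $[-2,-2,-2]$. Recall from the construction in $\S$\ref{subsec:LogRes} that $E$ is the $(-1)$-curve meeting exactly three components of $D$: the curve $\tilde{A}$, one component of the tree $D_{1}$, and the extreme left component of the chain $D_{2}$ (which has type $[-2,-2,-2]$ coming from the resolution of $Q$). So in the divisor $D_{1}\cup E\cup D_{2}$ whose complement in $X$ is $S$, the curve $E$ is a branching $(-1)$-curve attached to $\tilde{A}$; but once we contract $E$, the curve $\tilde{A}$ becomes $\tilde{A}^{2}+1$ while we must keep track of the fact that $\tilde{A}$ is \emph{not} part of the boundary of $S$.

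First I would reorganize the picture: since $\tilde{A}$ is disjoint from $D_{1}$ (Lemma \ref{lem:Negative} and the construction) and meets only $E$ among the boundary components, I would pass to the contraction $\tau\colon X\to\tilde X$ of $\tilde{A}$ and of the negative-definite chain $D_{2}$, exactly as in the proof of Lemma \ref{lem:Negative}. After contracting $\tilde{A}$, the image $\tau(E)$ has self-intersection $-1 - (\tilde A^2)\cdot 0 = -1$... no: I would instead contract in the other order, first noting that $E\cdot\tilde A=1$ with $\tilde A^2\le -2$, so $E$ cannot be contracted first without creating worse singularities, whereas $\tilde A$ can. Contracting $\tilde A$ turns $E$ into a curve of self-intersection $-1+1=0$ meeting the rest of $D_1\cup D_2$; the key numerical point is that the resulting $0$-curve $\bar E$ then defines a $\mathbb{P}^1$-fibration on the blown-down surface (by $\S$\ref{subsec:Fibrations}(i)), and I would analyze its restriction to $S$. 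The fiber through the image of $\tilde A$ would be reducible, built from the image of the $[-2,-2,-2]$ chain together with part of $D_1$, and counting components and multiplicities in that fiber against the fact that $S\simeq\mathbb{A}^1\times$ (nothing, since $S$ is not a line bundle) — more precisely, against the Danilov–Gizatullin invariant $[-2,-2,-2]$ of any standard completion of $S$ — should force $\tilde A^2=-1$.

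The cleanest route, and the one I would actually write, is this: assume $\tilde A^2\le -2$. Contract $\tilde A$; the image of $E$ now has self-intersection $0$ and is still $k$-rational, so by $\S$\ref{subsec:Fibrations}(i) it is a fiber of a $\mathbb{P}^1$-fibration $\bar\rho$ on the new surface $X_1$. The boundary of $S$ inside $X_1$ is $D_1\cup \bar E\cup D_2$ with $\bar E$ now a full fiber; the section needed for $\bar\rho$ must come from $D_1$ (since $D_2$ and $\bar E$ form a degenerate fiber with its attached chain). Running the standard reduction of $\S$\ref{subsec:Trees-Chains}(ii)–(iii) to an $m$-standard completion $F\vartriangleright C\vartriangleright E_{\mathrm{stan}}$, the tail $E_{\mathrm{stan}}$ must have type $[-2,-2,-2]$; but the presence of the extra component coming from the blown-down $\tilde A$ (which contributes a $(-1)$-curve sitting over the node $E\cap D_2$) changes the tail and I would check it cannot be absorbed, a contradiction.

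The main obstacle I anticipate is the bookkeeping in the last step: after contracting $\tilde A$ one must verify carefully that the resulting degenerate fiber of $\bar\rho$, together with the tree $D_1$, really does reduce to a standard chain whose tail is forced to differ from $[-2,-2,-2]$ — in particular one must rule out the possibility that further contractions inside $D_1$ (which, by Lemma \ref{lem:Negative}, is nonempty with non–negative-definite intersection form) conspire to restore the $[-2,-2,-2]$ tail. Handling this requires keeping precise track of which $(-1)$-curves appear after blowing down $\tilde A$ and in what order they can be contracted while staying inside the boundary; I expect this to occupy the bulk of the argument, whereas the initial contractions and the appeal to the $\mathbb{P}^1$-fibration and the Danilov–Gizatullin invariant are routine.
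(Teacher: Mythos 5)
Your plan breaks down at its central numerical step. You assume $\tilde{A}^{2}\leq-2$ and then ``contract $\tilde{A}$'', asserting that the image of $E$ becomes a $0$-curve because $-1+1=0$. That computation is only valid for the smooth blow-down of a $(-1)$-curve. A smooth rational curve with $\tilde{A}^{2}=-a\leq-2$ cannot be contracted to a smooth point: the contraction produces a normal surface with a cyclic quotient singularity, intersection numbers become rational, and the image of $E$ has self-intersection $-1+1/a\in(-1,0)$, not $0$. Consequently there is no $0$-curve, \S\ref{subsec:Fibrations}~(i) (which requires a smooth projective surface and an honest $0$-curve) does not apply, and no $\mathbb{P}^{1}$-fibration $\overline{\rho}$ appears on your blown-down surface. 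Since everything downstream (the degenerate fiber containing $D_{2}$ and the image of $\tilde{A}$, the reduction to a standard completion, the comparison with the Danilov--Gizatullin tail $[-2,-2,-2]$) hinges on the existence of that fibration, the argument collapses at the start. A secondary inaccuracy: at this stage of \S\ref{subsec:LogRes} the chain $D_{2}$ is only known to be a nonempty SNC-minimal negative definite chain; its type is not yet $[-2,-2,-2]$ (that type is the invariant of \emph{standard} completions of $S$, extracted only after further modifications). Finally, even granting your setup, the decisive steps are left as ``counting \ldots{} should force'' and ``I would check it cannot be absorbed'', so the contradiction is never actually produced.

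It is worth noting that the paper's proof of Lemma~\ref{lem:Asquare} is of a genuinely different and heavier nature, which suggests no simple blow-down bookkeeping will suffice. Following Palka \cite{Pal15}, it passes to $k=\mathbb{C}$, runs the almost-minimalization process on the pair $(X,\Delta)$ with $\Delta=D_{1}\cup\tilde{A}\cup D_{2}$, and controls $\chi(X_{i}\setminus\Delta_{i})\leq-1$ using Lemma~\ref{lem:Negative}. If $\overline{\kappa}(X,\Delta)\geq0$ the logarithmic Bogomolov--Miyaoka--Yau inequality \cite{La03} forces $D_{2}$ and $\tilde{A}$ to be $(-2)$-curves, and the resulting twisted $\mathbb{A}_{*}^{1}$-fibration defined by $|D_{2}+2E+\tilde{A}|$ leads to a boundary chain of type $[-2,a,-2]$, contradicting the $[-2,-2,-2]$ invariant of \S\ref{subsec:Trees-Chains}~(iii); if $\overline{\kappa}=-\infty$, $\mathbb{A}^{1}$-ruledness together with Miyanishi's theorem that all fibers of an $\mathbb{A}^{1}$-fibration on $S$ are irreducible \cite[Theorem 4.3.1]{MiyBook} yields the contradiction. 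Your proposal contains no substitute for either of these inputs, so beyond the false contraction step it is also missing the ideas that actually carry the proof.
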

\begin{proof}
Similarly as in the proof of the previous lemma, the assertion is
invariant under restriction to a subfield $k_{0}\subset k$ of finite
transcendence degree over $\mathbb{Q}$ over which $X$ and $D$ are
defined and extension to $\mathbb{C}$ via the choice of an embedding
$k_{0}\hookrightarrow\mathbb{C}$. So we may again assume that $k=\mathbb{C}$.
The following argument is inspired from \cite{Pal15}. Suppose for
contradiction $\tilde{A}^{2}<-1$. Then $\Delta=D_{1}\cup\tilde{A}\cup D_{2}$
is an SNC-minimal divisor on $X$ with three connected components,
whose complement is a smooth quasi-projective surface $V$ such that
$V\setminus(E\cap V)\simeq S\setminus A$. The theory of minimal models
of log-surfaces (see \cite[Chapter 3]{MiyBook} for a detailed account)
asserts the existence of a sequence of projective birational morphisms
\begin{equation}
f=f_{n}\circ\cdots\circ f_{1}:(X,\Delta)=(X_{0},\Delta_{0})\stackrel{f_{1}}{\longrightarrow}(X_{1},\Delta_{1})\stackrel{f_{2}}{\longrightarrow}\cdots\stackrel{f_{n}}{\longrightarrow}(X_{n},\Delta_{n})\label{eq:Minimalization}
\end{equation}
with the following properties: 

a) For every $i=1,\ldots,n$, $X_{i}$ is a smooth projective surface
and $\Delta_{i}$ is an SNC-minimal reduced divisor

b) For every $i=1,\ldots,n$, $f_{i}:(X_{i-1},\Delta_{i-1})\rightarrow(X_{i},\Delta_{i})$
is the contraction of a $(-1)$-curve $\ell_{i}\not\subset\Delta_{i-1}$
intersecting $\Delta_{i-1}$ transversally in at most two smooth points
and each connected component of $\Delta_{i-1}$ at most once and such
that $\overline{\kappa}(X_{i},\Delta_{i})=\overline{\kappa}(X_{i},\Delta_{i}\cup\ell_{i})$,
followed by the SNC-minimalization $\Delta_{i}$ of the push-forward
$(f_{i})_{*}\Delta_{i-1}$ of $\Delta_{i}$. Furthermore, if $\ell_{i}$
intersects precisely two connected components of $\Delta_{i-1}$ then
one of these components is rational chain with negative definite intersection
matrix. 

c) The pair $(X_{n},\Delta_{n})$ is \emph{almost-minimal}, meaning
that every log Minimal Model Program ran from $(X_{n},\Delta_{n})$
terminates with a log terminal projective surface $(X_{n+1},\Delta_{n+1})$
and the induced birational morphism $f_{n+1}:(X_{n},\Delta_{n})\rightarrow(X_{n+1},\Delta_{n+1}=(f_{n+1})_{*}\Delta_{n})$
contracts only irreducible components of $\Delta_{n}$ onto quotient
singularities of $X_{n+1}$. Equivalently, $(X_{n+1},\Delta_{n+1})$
is a log-terminal pair which does not contain any proper curve $\ell$
such that $\ell^{2}<0$ and $\ell\cdot(K_{X_{n+1}}+\Delta_{n+1})<0$
and $f_{n+1}:(X_{n},\Delta_{n})\rightarrow(X_{n+1},\Delta_{n+1})$
is its minimal log-resolution. 

Since $\Delta_{0}=D_{1}\cup\tilde{A}\cup D_{2}$ has three connected
components, $\Delta_{n}$ has at most three connected components,
and since by Lemma \ref{lem:Negative} a) the intersection matrix
of $D_{1}$ is not negative definite, the connected component of $\Delta_{n}$
containing the image of $D_{1}$ is not contracted to a point by $f_{n+1}$.
So the exceptional locus $\mathrm{Exc}(f_{n+1})$ consists of at most
two connected components of $\Delta_{n}$, and since $\Delta_{n}$
is SNC-minimal, $f_{n+1}(\mathrm{Exc}(f_{n+1}))$ consists of singular
points of $X_{n+1}$. In particular, the local fundamental group $G_{p}$
at every point $p\in f_{n+1}(\mathrm{Exc}(f_{n+1}))$ has order at
least $2$. An elementary calculation shows that the topological Euler
characteristic of the surface $X_{i-1}\setminus\Delta_{i-1}$ increases
at a step if and only if the curve $\ell_{i}$ contracted by $f_{i}$
intersects two connected components of $\Delta_{i-1}$ and the union
of $\ell_{i}$ with these components is contracted by $f_{i}$ to
a smooth point of $X_{i}$. If such a curve existed, then by Lemma
\ref{lem:Negative} b), one of these connected components would necessarily
be the one containing the image of $D_{1}$, which would imply in
turn that the intersection matrix of $D_{1}$ is definite negative,
a contradiction to Lemma \ref{lem:Negative} a). So for every $i=1,\ldots,n$,
\[
\chi(X_{i}\setminus\Delta_{i})\leq\chi(V)=\chi(S\setminus A)+\chi(E\cap V)=-1.
\]

Now suppose that $\overline{X}\setminus\Delta$ has non-negative logarithmic
Kodaira dimension $\overline{\kappa}(X,\Delta)\geq0$. Then $\overline{\kappa}(X_{n},\Delta_{n})\geq0$
and since $(X_{n},\Delta_{n})$ is almost-minimal, it follows from
the logarithmic Bogomolov-Miyaoka-Yau inequality \cite{La03} that
\[
0\leq\chi(X_{n}\setminus\Delta_{n})+\sum_{p\in f_{n+1}(\mathrm{Exc}(f_{n+1}))}\frac{1}{|G_{p_{i}}|}\leq\chi(X_{n}\setminus\Delta_{n})+\frac{1}{2}\pi_{0}(\mathrm{Exc}(f_{n+1})).
\]
The only possibility is thus that $\chi(X_{n}\setminus\Delta_{n})=\chi(V)=-1$
and that $f_{n+1}(\mathrm{Exc}(f_{n+1}))$ consists of two points,
with local fundamental group $\mathbb{Z}_{2}$. The corresponding
connected components of $\Delta_{n}$ are thus simply $(-2)$-curves,
and we deduce in turn from Lemma \ref{lem:Negative} a) that $D_{2}$
and $\tilde{A}$ are $(-2)$-curves themselves. Since $E^{2}=-1$
by hypothesis, the complete linear system $|D_{2}+2E+\tilde{A}|$
on $X$ defines a $\mathbb{P}^{1}$-fibration $\overline{\xi}:X\rightarrow\mathbb{P}^{1}$
having the irreducible component $D_{1,E}$ of $D_{1}$ intersecting
$E$ as a $2$-section. The restriction of $\xi$ to $S=X\setminus(D_{1}\cup E\cup D_{2})$
is thus a twisted $\mathbb{A}_{*}^{1}$-fibration $\xi:S\rightarrow Z$
over an open subset $Z$ of $\mathbb{P}^{1}$. Since $S$ is a $\mathbb{Q}$-acyclic,
it follows from \cite[Lemma 4.5.1]{MiyBook} that $Z=\mathbb{A}^{1}$.
The fiber $\overline{\xi}$ over the point $\infty=\mathbb{P}^{1}\setminus Z$
is supported by a connected component $F_{\infty}$ of $D_{1}-D_{1,E}$,
and since $D_{1}$ is a rational tree, $D_{1,E}$ intersects $F_{\infty}$
transversally in a unique point. Since $D_{1}$ is SNC-minimal, we
infer that $F_{\infty}=F_{\infty,1}\vartriangleright L\vartriangleright F_{\infty,2}$
is a chain of type $[-2,-1,-2]$ intersecting $D_{1,E}$ along $L$.
Since $S=\mathbb{P}^{2}\setminus Q$ admits a smooth completion by
a rational curve, it follows from \cite[Theorem 2.16]{Du05} that
by contracting successively $E$, $D_{2}$ and then all successive
non-branching $(-1)$-curves in $D_{1}$, the image of $D$ in the
corresponding smooth projective surface $Y$ is an SNC-minimal chain
$B$ such that $Y\setminus B\simeq S$. Since the irreducible components
$F_{\infty,1}$ and $F_{\infty,2}$ are untouched during this process,
$B$ must be equal to the image of $F_{\infty}$, which is a chain
of type $[-2,a,-2]$ for some $a\geq0$. But one checks that no such
chain can be transformed into one of type $[0,-1,-2,-2,-2]$, a contradiction
to $\S$ \ref{subsec:Trees-Chains} (iii). 

So $\overline{\kappa}(X,\Delta)=-\infty$, and hence $\overline{\kappa}(X_{n},\Delta_{n})=-\infty$.
Since $\chi(X_{n}\setminus\Delta_{n})\leq-1$ and the connected component
of $\Delta_{n}$ containing the image of $D_{1}$ is not contracted
by $f_{n+1}$, it follows from Theorem 3.15.1 and Theorem 5.1.2 in
\cite{MiyBook} that $X_{n}\setminus\Delta_{n}$ $\mathbb{A}^{1}$-ruled,
i.e. contains a Zariski open subset of the form $C\times\mathbb{A}^{1}$
for a certain smooth rational curve $C$. It follows in turn that
$V$ is $\mathbb{A}^{1}$-ruled, and we let $q:V\dashrightarrow\mathbb{P}^{1}$
and $\overline{q}:X\dashrightarrow\mathbb{P}^{1}$ be the rational
maps induced by the projection $\mathrm{pr}_{C}$. By virtue of Lemma
\ref{lem:Negative} b), the closure $F$ in $X$ of a general fiber
of $q$ intersects $D_{1}$. So $q:V\rightarrow Z$ is a well defined
$\mathbb{A}^{1}$-fibration over an open subset $Z$ of $\mathbb{P}^{1}$,
and if $\overline{q}$ is not regular, then its unique proper base
point is supported on $D_{1}$. Furthermore, $D_{2}$ and $\tilde{A}$
are necessarily contained in fibers of $\overline{q}:X\dashrightarrow\mathbb{P}^{1}$.
So $\overline{q}$ is well defined on $S$, restricting to an $\mathbb{A}^{1}$-fibration
$\rho:S\rightarrow\mathbb{A}^{1}$ containing $A$ in one of its fibers.
Let $\delta:Y\rightarrow X$ be the minimal resolution of the indeterminacies
of $\overline{q}$, so that $\overline{\rho}=\overline{q}\circ\sigma:Y\rightarrow\mathbb{P}^{1}$
is an everywhere defined $\mathbb{P}^{1}$-fibration, say with section
$C\subset\delta^{-1}(D_{1})$. Since the closures in $X$ of the general
fibers of $q$ intersect $D_{1}$, the proper transform in $Y$ of
$D_{2}\cup E\cup\tilde{A}$ is contained in $\overline{\rho}^{-1}(\rho(A))$
and since by \cite[Theorem 4.3.1]{MiyBook}, all fibers of $\rho:S\rightarrow\mathbb{A}^{1}$
are irreducible, $\overline{\rho}^{-1}(\rho(A))$ is the union of
the proper transform of $D_{2}\cup E\cup\tilde{A}$, the proper transform
of a subset of irreducible components of $D_{1}$, possibly empty,
and a subset of exceptional divisors of $\sigma$, again possibly
empty. Since $D_{2}$ is a nonempty chain with negative definite intersection
matrix and $\tilde{A}^{2}<-1$, at the step of the contraction of
$\overline{\rho}^{-1}(\rho(A))$ onto a smooth fiber of a $\mathbb{P}^{1}$-fibration,
the image of $E$ would have to become a $(-1)$-curve intersecting
the images of $D_{2}$ and $\tilde{A}$, and either an irreducible
component of the image of $\overline{\rho}^{-1}(\rho(A))$ or the
image of the section $C$, which is impossible. This is absurd, and
so $\tilde{A}^{2}=-1$ necessarily. 
\end{proof}

\subsection{\label{subsec:ProofThm} Proof of Proposition \ref{prop:Intermediate-Formulation}}

\indent\newline\noindent Proposition \ref{prop:Intermediate-Formulation},
and hence Theorem \ref{thm:MainThm}, are now consequences of the
following lemma: 
\begin{lem}
\label{prop:twoCases}With the notation of \S \ref{subsec:LogRes}
above, the following alternative holds: 

1) Either $\tilde{A}^{2}=0$ and then there exists a birational map
$\psi:X\dashrightarrow\mathbb{P}^{2}$ restricting to an isomorphism
between $S=X\setminus D$ and the complement of a smooth conic $Q'$
and mapping $\tilde{A}$ to a smooth conic intersecting $Q'$ with
multiplicity $4$ at a single $k$-rational point $p'$. 

2) Or $\tilde{A}^{2}=-1$ and then there exists a birational map $\psi:X\dashrightarrow\mathbb{P}^{2}$
restricting to an isomorphism between $S=X\setminus D$ and the complement
of a smooth conic $Q'$ and mapping $\tilde{A}$ to the tangent line
to $Q'$ at a $k$-rational point $p'$. 
\end{lem}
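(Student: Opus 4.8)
The plan is to distinguish the two possible values of $\tilde A^{2}$ and, in each case, to realize $\psi$ as a composition of contractions of the boundary divisor $B=D_{1}\cup E\cup D_{2}$, following at every step the proper transform of $\overline A$; recall from \S\ref{subsec:LogRes} that $\tilde A$ meets $B$ transversally in the single point $\tilde A\cap E$, so $\tilde A\cdot B=1$ and $\tilde A$ is disjoint from $D_{1}$ and from $D_{2}$.

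\textbf{Pinning down $\tilde A^{2}$.} First I would show $\tilde A^{2}\in\{0,-1\}$. If $\tilde A^{2}<0$, then after a few inner elementary transformations inside the subchain $D_{2}\vartriangleright E$ --- which affect neither $\tilde A$ nor the isomorphism type of $S=X\setminus B$ --- one may assume $E^{2}=-1$, and Lemmas \ref{lem:Negative} and \ref{lem:Asquare} then give $\tilde A^{2}=-1$. It remains to exclude $\tilde A^{2}\geq 1$. Here $\tilde A$ is big and nef, so semiample, and the induced birational morphism contracts precisely the $\tilde A$-trivial curves, which therefore form a negative definite configuration containing $D_{1}$ and $D_{2}$; the resulting normal projective surface $\hat X$ has Picard number one, $S\setminus A$ sits inside it as the complement of the connected curve $\tilde A\cup E$ whose two components generate $\mathrm{Cl}(\hat X)\otimes\mathbb{Q}$, and an intersection-number computation on $\hat X$ (as in the proof of Lemma \ref{lem:Asquare}) forces $\tilde A^{2}=1$ and a boundary configuration for $S$ incompatible with the invariant $[-2,-2,-2]$ recorded in \S\ref{subsec:Trees-Chains}(iii); this contradiction rules out $\tilde A^{2}\geq 1$. (As a check, the adjunction identity $\tilde A^{2}=3\deg\overline A-2-\sum_{i}m_{i}(\overline A)$ together with the separation constraint $\sum m_{i}(\overline A)=2\deg\overline A$ over the centres lying on the successive transforms of $Q$, and the smoothness of $Q$, are consistent with exactly the two values $0$ and $-1$, attained respectively by an osculating conic and by a tangent line.)

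\textbf{Case $\tilde A^{2}=0$.} Since $\tilde A\cong\mathbb{P}^{1}_{k}$ has self-intersection $0$, by \S\ref{subsec:Fibrations}(i) the complete linear system $|\tilde A|$ defines a $\mathbb{P}^{1}$-fibration $\overline{\rho}\colon X\to\mathbb{P}^{1}_{k}$ with $\tilde A$ as a smooth fibre; as $\tilde A$ meets $B$ only along $E$ with $\tilde A\cdot E=1$, the component $E$ supports a section of $\overline{\rho}$ while $D_{1}$ and $D_{2}$ are vertical. Its restriction to $S$ is an $\mathbb{A}^{1}$-fibration $\rho\colon S\to Z$ with $A$ as a fibre; since $S$ is $\mathbb{Q}$-acyclic, $Z\cong\mathbb{A}^{1}_{k}$ by \cite[Lemma 4.5.1]{MiyBook} and its fibres are irreducible by \cite[Theorem 4.3.1]{MiyBook}. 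Performing elementary transformations supported on the degenerate fibres of $\overline{\rho}$ --- which leave the general fibre $\tilde A$ untouched --- and using the classification of standard completions of $S$ from \S\ref{subsec:Trees-Chains}, I would bring $B$ to the $1$-standard chain $[0,-1,-2,-2,-2]=F\vartriangleright C\vartriangleright E'$ with $\tilde A$ still a general fibre, hence disjoint from the $0$-curve $F$ and from the three $(-2)$-curves of $E'$, and meeting the section $C$ transversally in one point. Contracting $C$ and then, in turn, the three $(-1)$-curves that successively appear inside $E'$ reaches $\mathbb{P}^{2}_{k}$; the curve $F$ becomes a smooth conic $Q'$, and since $\tilde A$ meets each contracted curve transversally in one smooth point it becomes a smooth conic with $\tilde A\cdot Q'=4$ concentrated at a single $k$-rational point $p'$. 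The induced birational map $\psi\colon X\dashrightarrow\mathbb{P}^{2}_{k}$ restricts to an isomorphism $S\stackrel{\sim}{\rightarrow}\mathbb{P}^{2}_{k}\setminus Q'$, giving case~1.

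\textbf{Case $\tilde A^{2}=-1$.} Here $E^{2}=-1$ and $\tilde A\cdot E=1$, so $\tilde A\cup E$ is a chain of two $(-1)$-curves. Contracting $E$ yields a smooth completion $(X'',D_{1}''\cup D_{2}'')$ of $S$ in which the image of $\tilde A$ has self-intersection $0$ and meets the now connected boundary in the single point $D_{1}''\cap D_{2}''$. One is thereby reduced to a configuration of the previous type; standardising this boundary and contracting it down to $\mathbb{P}^{2}_{k}$ as above, while tracking the image of $\tilde A$ --- which now lies exactly at the point where the standardisation and the subsequent contractions act --- transforms it into a curve of degree $1$ meeting the resulting conic $Q'$ with multiplicity $2$ at a $k$-rational point $p'$, that is, into $T_{p'}Q'$. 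This gives case~2 and finishes the lemma.

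\textbf{Expected main obstacle.} The hard part is the determination of $\tilde A^{2}$, and within it the exclusion of $\tilde A^{2}\geq 1$: this genuinely needs the finer theory of $\mathbb{Q}$-acyclic (or log minimal) surfaces rather than the elementary combinatorics used afterwards, and must be supplemented by the technical verification that the elementary transformations bringing the boundary of an arbitrary minimal log-resolution to a standard chain can be chosen so as to keep $\tilde A$ in the prescribed position (a general fibre, resp.\ a $(-1)$-curve meeting $E$). Once these are settled, the contractions to $\mathbb{P}^{2}_{k}$ and the identification of the image of $\tilde A$ amount to the self-intersection bookkeeping already illustrated by the line and conic models in \S\ref{subsec:LogRes}.
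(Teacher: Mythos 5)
Your overall plan (split on the value of $\tilde{A}^{2}$, use fibrations, standardize the boundary chain and contract to $\mathbb{P}^{2}$) is the same as the paper's, but three of the steps you treat as routine are exactly where the paper has to work, and as written they do not go through. First, your exclusion of $\tilde{A}^{2}\geq 1$ is not a proof: after contracting the $\tilde{A}$-trivial curves you assert that ``an intersection-number computation forces $\tilde{A}^{2}=1$ and a boundary configuration incompatible with $[-2,-2,-2]$'', but no such computation is exhibited, and the cited Lemma \ref{lem:Asquare} concerns the regime $\tilde{A}^{2}<-1$, not $\tilde{A}^{2}>0$. The paper's actual argument is different and concrete: blow up on $\tilde{A}$ at $E\cap\tilde{A}$ until it becomes a $0$-curve, take the induced $\mathbb{P}^{1}$-fibration, and observe that the connected vertical boundary would then constitute a full fibre, so the induced $\mathbb{A}^{1}$-fibration on $S$ would have no degenerate fibre, contradicting \cite[Theorem 4.3.1]{MiyBook}. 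Second, your reduction of the case $E^{2}\geq 0$ to $E^{2}=-1$ by ``inner elementary transformations inside $D_{2}\vartriangleright E$'' is not legitimate: any modification that lowers $E^{2}$ while preserving $S$ must blow up points of the boundary, which either attaches a new branch to $E$ or destroys the SNC-minimality of $D_{2}$, and Lemma \ref{lem:Asquare} is proved precisely for the configuration coming from the minimal log-resolution (its proof uses that $\Delta=D_{1}\cup\tilde{A}\cup D_{2}$ is SNC-minimal with $D_{2}$ a negative definite chain and deduces at a key point that $D_{2}$ is a single $(-2)$-curve). This is why the paper treats $E^{2}\geq0$ as a separate subcase, proving first via \cite[Theorem 2.16]{Du05} that $D$ is a chain, passing to a $1$-standard completion and using the fibration $|F_{\infty}|$ together with \cite[Theorem 4.3.1]{MiyBook} to force $\tilde{A}^{2}=-1$.

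Third, in the case $\tilde{A}^{2}=E^{2}=-1$, contracting $E$ does not reduce you to ``a configuration of the previous type'': the image of $\tilde{A}$ then passes through the point where the images of $D_{1}$ and $D_{2}$ meet, so in the fibration defined by its class both boundary branches become horizontal; the restriction to $S$ is an $\mathbb{A}_{*}^{1}$-fibration, not an $\mathbb{A}^{1}$-fibration, and the Case-1 analysis does not apply. The substantive content of this subcase in the paper is to use the fibration $|E+\tilde{A}|$ and Miyanishi's structure results for $\mathbb{A}_{*}^{1}$-fibrations on $\mathbb{Q}$-acyclic surfaces (Lemma 4.5.1, Theorems 4.6.1--4.6.2) to show that $D_{1}$ and $D_{2}$ are irreducible, hence that $D$ is a chain $[D_{1}^{2},-1,D_{2}^{2}]$, and then the Danilov--Gizatullin invariant $[-2,-2,-2]$ forces $D_{1}^{2}=2$, $D_{2}^{2}=-2$; only then does the contraction of $E\cup D_{2}$ identify the image of $\tilde{A}$ as a tangent line to a conic. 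Your ``tracking the image of $\tilde{A}$'' presupposes this structure rather than proving it. (In Case 1 your standardization step similarly skips the paper's argument that $D_{1}$ must be a full fibre, hence a single $0$-curve, so that $D$ is already a chain to which \S\ref{subsec:Trees-Chains}(iii) applies.) These gaps are genuine: each corresponds to a nontrivial portion of the paper's proof that cannot be replaced by the shortcuts proposed.
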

\begin{proof}
We consider the two cases $\tilde{A}^{2}\geq0$ and $\tilde{A}^{2}<0$
separately.

Case 1). $\tilde{A}^{2}\geq0$. Suppose first that $a=\tilde{A}^{2}>0$.
Then by performing a minimal sequence of blow-ups of $k$-rational
points supported on the successive proper transforms of $\tilde{A}$,
starting with that of $E\cap\tilde{A}$ and then continuing with those
of the intersection point of the proper transform of $\tilde{A}$
with the previous exceptional divisor produced, we obtain a surface
$\beta:Y\rightarrow X$ in which the proper transform of $\tilde{A}$
has self-intersection $0$, while its reduced total transform is a
rational chain $\tilde{A}\vartriangleright H\vartriangleright D_{3}$
where $H$ is a $(-1)$-curve and $D_{3}$ is a chain of $a-1$ curves
with self-intersection $-2$ connecting $H$ to $E$. The complete
linear system $|\tilde{A}|$ then defines a $\mathbb{P}^{1}$-fibration
$\overline{q}:Y\rightarrow\mathbb{P}_{k}^{1}$ having $\tilde{A}$
as a smooth fiber and $H$ as a section. 

\begin{figure}[!htb]
\psset{linewidth=0.8pt}
\begin{pspicture}(-2,0.6)(10,-5)

\rput(-1,0){
\psscalebox{1 0.8}{
\psline(-1,-4)(-1,0.2)
\rput(-1.4,-1.5){$\tilde{A}$}
\rput(-0.8,-1.55){$0$}
\psline(-1.5,0)(2.5,0)
\rput(0.8,0.3){$H$}
\rput(0.8,-0.3){$-1$}
\psline[linestyle=dashed,linewidth=0.05pt, linecolor=gray ](-0.5,-4)(-0.5,0.2)
\psline[linestyle=dashed,linewidth=0.05pt, linecolor=gray ](-0,-4)(-0,0.2)
\psline[linestyle=dashed,linewidth=0.05pt, linecolor=gray ](0.5,-4)(0.5,0.2)
\psline[linestyle=dashed,linewidth=0.05pt, linecolor=gray ](1,-4)(1,0.2)
\psline[linestyle=dashed,linewidth=0.05pt, linecolor=gray ](1.5,-4)(1.5,0.2)
\pnode(2,0.2){D3a}
\pnode(2.5,-1.5){D3b}
\ncline{D3a}{D3b}\ncput*{{\small $D_3$}}
\psline(2.5,-1.2)(2,-2.2)
\rput(2,-1.4){$E$}
\pnode(2,-2){D2a}
\pnode(2.5,-3.7){D2b}
\ncline{D2a}{D2b}\ncput*{{\small $D_2$}} 
\pnode(2.1,-1.75){D1a}
\pnode(3.8,-1.75){D1b}
\ncline{D1a}{D1b}\ncput*{{\small $D_1$}} 
\pnode(0.8,-4.5){qa}
\pnode(0.8,-5.5){qb}
\ncline{->}{qa}{qb}\naput{{\small $\overline{q}$}}
\psline(-1.5,-6)(2.8,-6)
\rput(3,-6){{\small $\mathbb{P}^1_k$}}
}
}

\rput(7,0){
\psscalebox{1 0.8}{
\psline(-1,-4)(-1,0.2)
\rput(-1.4,-1.5){$\tilde{A}$}
\rput(-0.8,-1.55){$0$}
\psline(-1.5,0)(2.5,0)
\rput(0.8,0.3){$E$}
\psline[linestyle=dashed,linewidth=0.05pt, linecolor=gray ](-0.5,-4)(-0.5,0.2)
\psline[linestyle=dashed,linewidth=0.05pt, linecolor=gray ](-0,-4)(-0,0.2)
\pnode(0.5,-4){D2a}
\pnode(0.5,0.2){D2b}
\ncline{D2a}{D2b}\ncput*{{\small $D_2$}} 
\psline[linestyle=dashed,linewidth=0.05pt, linecolor=gray ](1,-4)(1,0.2)
\psline[linestyle=dashed,linewidth=0.05pt, linecolor=gray ](1.5,-4)(1.5,0.2)
\pnode(2,-4){D1a}
\pnode(2,0.2){D1b}
\ncline{D1a}{D1b}\ncput*{{\small $D_1$}} 

\pnode(0.8,-4.5){rhoa}
\pnode(0.8,-5.5){rhob}
\ncline{->}{rhoa}{rhob}\naput{{\small $\overline{\rho}$}}
\psline(-1.5,-6)(2.8,-6)
\rput(3,-6){{\small $\mathbb{P}^1_k$}}
}
}
\end{pspicture}
\caption{The $\mathbb{P}^1$-fibrations $\overline{q}:Y\rightarrow \mathbb{P}^1_k$ and $\overline{\rho}:X\rightarrow \mathbb{P}^1_k$ respectively.}
\label{fig:PAFib-posAsquare}
\end{figure}
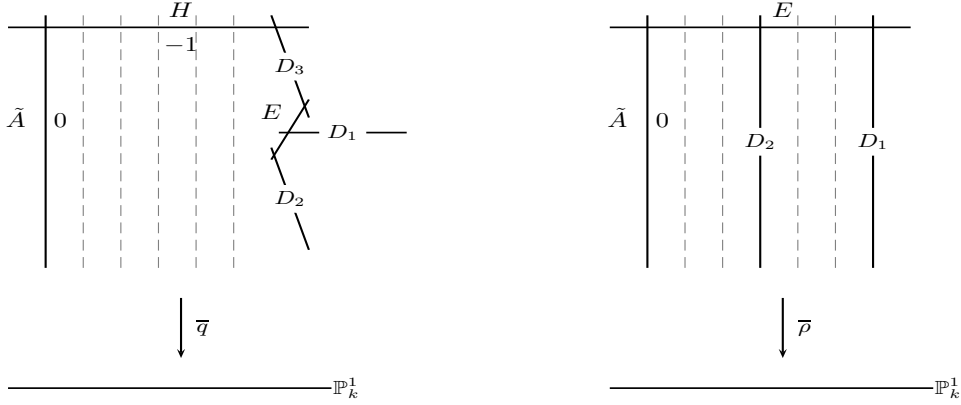The surface $Y$, the rational chain $\tilde{A}\vartriangleright H\vartriangleright D_{3}$
and the $\mathbb{P}^{1}$-fibration are all defined over a subfield
$k_{0}\subset k$ of finite transcendence degree over $\mathbb{Q}$,
and applying \cite[Theorem 4.3.1]{MiyBook} to the complex surface
obtained by base change via an embedding $k_{0}\hookrightarrow\mathbb{C}$,
we conclude that the restriction of $\overline{q}$ to $S\simeq Y\setminus\beta^{-1}(D)$
is an $\mathbb{A}^{1}$-fibration $q:S\rightarrow\mathbb{A}_{k}^{1}$
with a unique degenerate fiber. Since the union of the proper transform
of $D_{1}\cup E\cup D_{2}$ with $D_{3}$ is connected, it would be
fully contained in a unique degenerate fiber of $\overline{q}:Y\rightarrow\mathbb{P}_{k}^{1}$
hence equal to it, and $q$ would be an $\mathbb{A}^{1}$-fibration
without any degenerate fiber, a contradiction. 

So $\tilde{A}^{2}=0$, and the $\mathbb{P}^{1}$-fibration $\overline{\rho}:X\rightarrow\mathbb{P}_{k}^{1}$
defined by the complete linear system $|\tilde{A}|$ restricts to
an $\mathbb{A}^{1}$-fibration $\rho:S\rightarrow\mathbb{A}_{k}^{1}$
having $\tilde{A}\cap S\simeq A$ as a fiber. Since $E$ is a section
of $\overline{\rho}$, for the same reason as before, either $D_{1}$
or $D_{2}$ supports a full fiber $F$ of $\overline{\rho}:X\rightarrow\mathbb{P}_{k}^{1}$,
and since the intersection matrix of $D_{2}$ is negative definite
and $D_{1}$ is SNC minimal, it must be that $F=D_{1}$ is a $(0)$-curve.
So $D=D_{1}\vartriangleright E\vartriangleright D_{2}$ is a $(-E^{2})$-standard
chain, and by \S \ref{subsec:Trees-Chains} (iii), $D_{2}$ is thus
a chain of type $[-2,-2,-2]$. After performing elementary transformations
with center on $E$ if necessary to reach a smooth completion $(X',B')$
of $S$ by a rational chain $F_{\infty}\vartriangleright E\vartriangleright D_{2}$
of type $[0,-1,-2,-2,-2]$, the images of $F_{\infty}$ and $\tilde{A}$
by the contraction $\tau:X'\rightarrow X''$ of $E\cup D_{2}$ are
curves of self-intersection $4$ intersecting each others with multiplicity
$4$ in a single $k$-rational point. Since $X''\setminus\tau(F_{\infty})\simeq S$
and $\mathrm{Cl}(S)\simeq\mathbb{Z}_{2}$, $\mathrm{Cl}(X'')\otimes_{\mathbb{Z}}\mathbb{Q}$
is freely generated by the class of $\tau(F_{\infty})$, and since
$X''$ is a smooth $k$-rational surface, we conclude that $X''\simeq\mathbb{P}_{k}^{2}$
and that $\tau(F_{\infty})$ and $\tau(\tilde{A})$ are smooth $k$-rational
conics.

Case 2). $\tilde{A}^{2}<0$. By Lemma \ref{lem:Negative} a), $D_{1}$
is not empty. We consider two subcases according to the self-intersection
of $E$.

$\quad$ Subcase 1). $E^{2}=-1$. By virtue of Lemma \ref{lem:Asquare},
$\tilde{A}^{2}=-1$. It follows that the complete linear system $|E+\tilde{A}|$
on $X$ defines a $\mathbb{P}^{1}$-fibration $\overline{\xi}:X\rightarrow\mathbb{P}_{k}^{1}$
having the irreducible components $D_{1,E}$ and $D_{2,E}$ of $D_{1}$
and $D_{2}$ intersecting $E$ as disjoint sections. The restriction
of $\overline{\xi}$ to $S=X\setminus(D_{1}\cup E\cup D_{2})$ is
thus an untwisted $\mathbb{A}_{*}^{1}$-fibration $\xi:S\rightarrow Z$
over a smooth curve $Z\subset\mathbb{P}_{k}^{1}$ having $A=\tilde{A}\cap S$
as a degenerated fiber of multiplicity $1$. Let again $k_{0}\subset k$
be a subfield of finite transcendence degree over $\mathbb{Q}$ over
which $X$, $D$ and $\overline{\xi}$ are defined, denote by $X_{\mathbb{C}}$,
$D_{\mathbb{C}}$ and $\overline{\xi}_{\mathbb{C}}$ the corresponding
surface, divisor and morphism obtained by base extension via an embedding
$k_{0}\hookrightarrow\mathbb{C}$, and let $S_{\mathbb{C}}=X_{\mathbb{C}}\setminus(D_{1,\mathbb{C}}\cup E_{\mathbb{C}}\cup D_{2,\mathbb{C}})\simeq\mathbb{P}_{\mathbb{C}}^{2}\setminus Q_{\mathbb{C}}$.
It follows from Lemma 4.5.1 and Theorem 4.6.2 in \cite{MiyBook} applied
to $S_{\mathbb{C}}$ that $Z_{\mathbb{C}}=\mathbb{P}_{\mathbb{C}}^{1}$
and that $\xi_{\mathbb{C}}=\overline{\xi}_{\mathbb{C}}\mid_{S_{\mathbb{C}}}$
has at most a second degenerate fiber, whose support $F$ is isomorphic
to $\mathbb{A}_{*}^{1}$. Since $(D_{2,E})_{\mathbb{C}}^{2}=D_{2,E}^{2}\leq-2$
and $H_{1}(S_{\mathbb{C}};\mathbb{Z})=\mathbb{Z}_{2}$ we deduce from
$\S$ 4.5.2 (5) and Theorem 4.6.1 (2) in \cite{MiyBook} that $\overline{\xi}_{\mathbb{C}}^{-1}(\overline{\xi}_{\mathbb{C}}(E_{\mathbb{C}}\cup\tilde{A}))$
is actually the unique degenerate fiber of $\overline{\xi}_{\mathbb{C}}$.
It follows in turn that $D_{1,\mathbb{C}}=(D_{1,E})_{\mathbb{C}}$
and $D_{2,\mathbb{C}}=(D_{2,E})_{\mathbb{C}}$ and hence that $D_{1}=D_{1,E}$
and $D_{2}=D_{2,E}$. Thus $D$ is a chain $D_{1}\vartriangleright E\vartriangleright D_{2}$
of type $[D_{1}^{2},-1,D_{2}^{2}]$, where $D_{2}^{2}\leq-2$ and
where, by virtue of Lemma \ref{lem:Negative} a), $D_{1}^{2}\geq0$
because the intersection matrix of $D_{1}$ is not negative definite.
Such a chain has a $1$-standard form of type $[0,-1,-2,-2,-2]$ if
and only if $D_{1}^{2}=2$ and $D_{2}^{2}=-2$, and then the images
of $D_{1}$ and $\tilde{A}$ by the contraction $\tau:X\rightarrow\mathbb{P}_{k}^{2}$
of $E$ and $D_{2}$ are respectively a smooth $k$-rational conic
$Q'$ and its tangent line $T_{p'}Q'$ at the $k$-rational point
$p'=\tau(E\cup D_{2})$.

$\quad$ Subcase 2). $E^{2}\geq0$. Since $D_{1}$ is SNC-minimal,
$D$ is SNC minimal, and since the boundary of every SNC-minimal completion
of $S$ is a rational chain by virtue of \cite[Theorem 2.16]{Du05},
$D$ is a rational chain. So $D_{1}$ is an SNC-minimal rational chain
with non negative definite intersection matrix, and hence it contains
an irreducible component with non negative self-intersection. If $D_{1}$
is a $(0)$-curve, then the $\mathbb{P}^{1}$-fibration $\overline{q}:X\rightarrow\mathbb{P}_{k}^{1}$
defined by $|D_{1}|$ has $E$ as a section, hence restricts to an
$\mathbb{A}^{1}$-fibration on $S$ containing $A=\tilde{A}\cap S$
in one of its fibers. Since $D_{2}$ and $\tilde{A}$ both intersect
$E$, they are contained in two different fibers of $\overline{q}$.
But since $\tilde{A}^{2}<0$, $\tilde{A}$ would be properly contained
in a degenerate fiber of $\overline{q}$, which is impossible by virtue
of \S \ref{subsec:Fibrations} (ii). So $D_{1}$ is either reducible
or irreducible with positive self-intersection. By elementary birational
transformations whose centers blown-up and curves contracted are $k$-rational
and supported on\textbf{ $D_{1}$ }and its successive images, we obtain
a smooth completion $W$ of $S$ for which the reduced total transform
$\tilde{D}=W\setminus S$ of $D$ is a rational chain $\tilde{D}_{1}\vartriangleright E\vartriangleright D_{2}$,
where the reduced total transform $\tilde{D}_{1}=F_{\infty}\vartriangleright C\vartriangleright D_{3}$
of $D_{1}$ is a $1$-standard chain. The complete linear system $\left|F_{\infty}\right|$
on $Y$ defines a $\mathbb{P}^{1}$-fibration $\overline{\rho}:W\rightarrow\mathbb{P}_{k}^{1}$
with section $C$, containing $D_{3}\cup E\cup D_{2}\cup\tilde{A}$
in one of its fibers. 

\begin{figure}[!htb]
\psset{linewidth=0.8pt}
\begin{pspicture}(-2,0.6)(10,-5)
\rput(3,0){
\psscalebox{1 0.8}{
\psline(-1,-4)(-1,0.2)
\rput(-1.4,-1.5){$F_{\infty}$}
\rput(-0.8,-1.55){$0$}
\psline(-1.5,0)(2.5,0)
\rput(0.8,0.3){$C$}
\rput(0.8,-0.3){$-1$}
\psline[linestyle=dashed,linewidth=0.05pt, linecolor=gray ](-0.5,-4)(-0.5,0.2)
\psline[linestyle=dashed,linewidth=0.05pt, linecolor=gray ](-0,-4)(-0,0.2)
\psline[linestyle=dashed,linewidth=0.05pt, linecolor=gray ](0.5,-4)(0.5,0.2)
\psline[linestyle=dashed,linewidth=0.05pt, linecolor=gray ](1,-4)(1,0.2)
\psline[linestyle=dashed,linewidth=0.05pt, linecolor=gray ](1.5,-4)(1.5,0.2)
\pnode(2,0.2){D3a}
\pnode(2.5,-1.5){D3b}
\ncline{D3a}{D3b}\ncput*{{\small $D_3$}}
\psline(2.5,-1.2)(2,-2.2)
\rput(2,-1.4){$E$}
\pnode(2,-2){D2a}
\pnode(2.5,-3.7){D2b}
\ncline{D2a}{D2b}\ncput*{{\small $D_2$}} 
\psline(2.1,-1.75)(3.3,-1.75)
\rput(3,-2){$\tilde{A}$}
\pnode(0.8,-4.5){qa}
\pnode(0.8,-5.5){qb}
\ncline{->}{qa}{qb}\naput{{\small $\overline{\rho}$}}
\psline(-1.5,-6)(2.8,-6)
\rput(3,-6){{\small $\mathbb{P}^1_k$}}
}
}
\end{pspicture}
\caption{The $\mathbb{P}^1$-fibration $\overline{\rho}:W\rightarrow \mathbb{P}^1_k$. }
\label{fig:PAFib-posAsquare}
\end{figure}So $E^{2}\leq-1$ and since $E$ intersects $D_{2}$, $\tilde{A}$
and either an irreducible component of $D_{3}$ if $D_{3}$ is not
empty or $C$ otherwise, we have $E^{2}\leq-2$ necessarily. The chain
$D_{3}\vartriangleright E\vartriangleright D_{2}$ is thus SNC-minimal,
hence of type $[-2,-2,-2]$ by \S \ref{subsec:Trees-Chains} (iii).
By applying \cite[Theorem 4.3.1]{MiyBook} to the surface $S_{\mathbb{C}}$
defined in a similar way as in the previous subcase, we deduce that
$A_{\mathbb{C}}$ must be the support of the unique degenerate fiber
of the restriction $\rho_{\mathbb{C}}:S_{\mathbb{C}}\rightarrow\mathbb{A}_{\mathbb{C}}^{1}$
of $\overline{\rho}_{\mathbb{C}}$. So $\overline{\rho}_{\mathbb{C}}^{-1}(\rho_{\mathbb{C}}(A_{\mathbb{C}}))_{\mathrm{red}}=D_{3,\mathbb{C}}\cup E_{\mathbb{C}}\cup D_{2,\mathbb{C}}\cup\tilde{A}_{\mathbb{C}}$
implying that $\tilde{A}_{\mathbb{C}}^{2}=-1$. Thus $\tilde{A}^{2}=-1$
and the images of $F_{\infty}$ and $\tilde{A}$ by the contraction
of $C\cup D_{3}\cup E\cup D_{2}$ to a $k$-rational point $p'$ are
then respectively a smooth $k$-rational conic $Q'$ in $\mathbb{P}_{k}^{2}$
and its tangent line $T_{p'}Q'$ at $p'$. 
\end{proof}
As a consequence of Proposition \ref{prop:twoCases} and of the fact
that $\mathrm{Aut}(\mathbb{P}_{k}^{2})$ acts transitively on the
set of pairs $(Q,p)$ consisting of a smooth $k$-rational conic and
a $k$-rational point on it, we obtain the following:
\begin{cor}
Let $k$ be a field of characteristic $0$ and let $S=\mathbb{P}_{k}^{2}\setminus Q$
be the complement of a smooth $k$-rational conic $Q\subset\mathbb{P}_{k}^{2}$.
Then the following hold:

a) Every closed curves $A\simeq\mathbb{A}_{k}^{1}$ is equal to the
support of a fiber of an $\mathbb{A}^{1}$-fibration $\rho:S\rightarrow\mathbb{A}_{k}^{1}$.
More precisely, there exists a smooth completion $(\mathbb{P}_{k}^{2},Q')$
of $S$ by a smooth $k$-rational conic $Q'$ such that $A$ is the
support of a fiber of the $\mathbb{A}^{1}$-fibration induced by the
restriction of the pencil generated by $Q$ and twice its tangent
line at a $k$-rational point $p$.

b) There exists a unique equivalence class of $\mathbb{A}^{1}$-fibrations
$\rho:S\rightarrow\mathbb{A}_{k}^{1}$ on $S$ up to automorphisms,
in the sense that every two such $\mathbb{A}^{1}$-fibrations $\rho:S\rightarrow\mathbb{A}_{k}^{1}$
and $\rho':S\rightarrow\mathbb{A}_{k}^{1}$ fit into a commutative
diagram \[\xymatrix{ S \ar[r]^{\Psi} \ar[d]_{\rho} & S \ar[d]^-{\rho'} \\ \mathbb{A}^1_k \ar[r]^{\psi} & \mathbb{A}^1_k,} \]
for some automorphisms $\Psi$ and $\psi$ of $S$ and $\mathbb{A}_{k}^{1}$
respectively. 
\end{cor}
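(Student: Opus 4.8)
The plan is to derive both statements from Proposition~\ref{prop:twoCases}, together with the transitivity of $\mathrm{Aut}(\mathbb{P}_{k}^{2})$ on pairs (smooth $k$-rational conic, $k$-rational point on it) and the fact that $\mathcal{O}(\mathbb{P}_{k}^{2}\setminus C)^{*}=k^{*}$ for any irreducible conic $C$. For assertion (a), given a closed curve $A\simeq\mathbb{A}_{k}^{1}$ in $S$, Proposition~\ref{prop:twoCases} supplies an isomorphism $\psi\colon S\stackrel{\sim}{\longrightarrow}\mathbb{P}_{k}^{2}\setminus Q'$ onto the complement of a smooth $k$-rational conic $Q'$ sending $A$ onto either a smooth conic $Q_{1}$ meeting $Q'$ with multiplicity $4$ at a $k$-rational point $p'$, or onto the tangent line $T_{p'}Q'$ at such a point. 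In the first case $Q_{1}$ passes through the base locus $4p'$ of the pencil $\mathcal{P}_{p'}$ generated by $Q'$ and $2T_{p'}Q'$, hence belongs to $\mathcal{P}_{p'}$, so $\psi(A)=Q_{1}\cap(\mathbb{P}_{k}^{2}\setminus Q')$ is a fibre of $\rho_{p'}$; in the second case $2T_{p'}Q'$ is by definition a member of $\mathcal{P}_{p'}$, and $\psi(A)=T_{p'}Q'\cap(\mathbb{P}_{k}^{2}\setminus Q')$ is the reduced support of the corresponding fibre of $\rho_{p'}$. Transporting this structure back through $\psi$ exhibits the completion $(\mathbb{P}_{k}^{2},Q')$ and the $\mathbb{A}^{1}$-fibration required by (a).

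For (b), since the various $\rho_{p}$ on $S=\mathbb{P}_{k}^{2}\setminus Q$ (for $p$ a $k$-rational point of $Q$) are pairwise equivalent through projective automorphisms preserving $Q$, it suffices to prove that an arbitrary $\mathbb{A}^{1}$-fibration $\rho\colon S\to\mathbb{A}_{k}^{1}$ is equivalent, up to an automorphism of $S$ and one of $\mathbb{A}_{k}^{1}$, to a fixed $\rho_{p_{0}}$; the commutative square of the statement then follows by composing the equivalences for $\rho$ and $\rho'$. First I would choose a general fibre: as $\mathrm{char}\,k=0$, the generic fibre of $\rho$ is a form of $\mathbb{A}^{1}$ over $k(t)$, hence is $\mathbb{A}_{k(t)}^{1}$; spreading out and using that $k$ is infinite, there is a $k$-rational $z_{0}\in\mathbb{A}_{k}^{1}$ with $A:=\rho^{-1}(z_{0})$ a reduced closed curve isomorphic to $\mathbb{A}_{k}^{1}$. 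Applying (a) to $A$ yields an isomorphism $\psi\colon S\stackrel{\sim}{\to}S'=\mathbb{P}_{k}^{2}\setminus Q'$ and a $k$-rational $p'\in Q'$ with $\psi(A)$ the support of a fibre of $\rho_{p'}$; writing $\rho'=\rho\circ\psi^{-1}$, we have $\mathrm{div}(\rho'-z_{0})=\psi(A)$ with multiplicity one.

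The key step, which I expect to be the main obstacle, is to show that $\psi(A)$ is a \emph{reduced} fibre of $\rho_{p'}$, i.e.\ that we are in case~$1$ and not in case~$2$ of Proposition~\ref{prop:twoCases}. I would argue by contradiction: if $\psi(A)=T_{p'}Q'\cap(\mathbb{P}_{k}^{2}\setminus Q')$, then $\mathrm{div}(\rho_{p'}-z_{1})=2\psi(A)$ for the appropriate value $z_{1}$, so $(\rho'-z_{0})^{2}$ and $\rho_{p'}-z_{1}$ have the same divisor on $S'$ and thus differ by a unit, i.e.\ an element of $\mathcal{O}(S')^{*}=k^{*}$; then $\rho_{p'}-z_{1}$ is, up to a scalar, the square of $\rho'-z_{0}$, which would make a general fibre of $\rho_{p'}$ the preimage under $\rho'$ of a length-$2$ subscheme of $\mathbb{A}_{k}^{1}$, hence geometrically reducible, contradicting that it is isomorphic to $\mathbb{A}_{k}^{1}$. (Alternatively one can check, via a resolution of the pencil defining $\rho$, that the closure of a general fibre is linearly equivalent to a multiple of $Q$ and so has even degree, ruling out case~$2$ directly.) Granting this, $\mathrm{div}(\rho'-z_{0})=\psi(A)=\mathrm{div}(\rho_{p'}-z_{1})$, hence $(\rho'-z_{0})/(\rho_{p'}-z_{1})\in k^{*}$ and $\rho'=\lambda\circ\rho_{p'}$ for some $\lambda\in\mathrm{Aut}(\mathbb{A}_{k}^{1})$. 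Finally I would pick $g\in\mathrm{Aut}(\mathbb{P}_{k}^{2})$ carrying $(Q',p')$ to $(Q,p_{0})$; since $g$ carries $\mathcal{P}_{p'}$ to $\mathcal{P}_{p_{0}}$ we get $\rho_{p'}=\mu\circ\rho_{p_{0}}\circ g$ with $\mu\in\mathrm{Aut}(\mathbb{A}_{k}^{1})$, whence $\rho=(\lambda\mu)\circ\rho_{p_{0}}\circ(g\circ\psi)$ with $g\circ\psi\in\mathrm{Aut}(S)$, which is exactly the desired equivalence.
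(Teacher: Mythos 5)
Your proposal is correct and follows essentially the route the paper intends: the corollary is stated there as a direct consequence of Lemma \ref{prop:twoCases} together with the transitivity of $\mathrm{Aut}(\mathbb{P}_{k}^{2})$ on pairs $(Q,p)$, and your argument is exactly this derivation with the details filled in. In particular, your verification for (b) — that a general fibre of an arbitrary $\mathbb{A}^{1}$-fibration is reduced with even-degree closure (e.g.\ because $\mathrm{div}(\rho-z_{0})$ extends to $\overline{A}+mQ$ of degree $0$ on $\mathbb{P}_{k}^{2}$), so it falls into the conic case, and that equality of divisors plus $\mathcal{O}(S)^{*}=k^{*}$ recovers the fibration up to an affine automorphism of the base — correctly supplies the steps the paper leaves implicit.
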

\begin{rem}
In the complex case, it was established more generally in \cite[Theorem 2.1]{GM08}
that on a smooth $\mathbb{Q}$-acyclic surface $S$ admitting a smooth
completion $(X,B)$ by a chain of rational curves, every closed curve
isomorphic to $\mathbb{A}_{\mathbb{C}}^{1}$ is the support of a fiber
of an $\mathbb{A}^{1}$-fibration $\rho:S\rightarrow\mathbb{A}_{\mathbb{C}}^{1}$.
But every such $\mathbb{Q}$-acyclic surface  different from $\mathbb{A}_{\mathbb{C}}^{2}$
or $\mathbb{P}_{\mathbb{C}}^{2}\setminus Q$ turns out to have more
than one equivalence class of $\mathbb{A}^{1}$-fibrations up to action
of its automorphism groups. Indeed, by \cite[Theorem 5.6]{Du06},
a $\mathbb{Q}$-acyclic surface as above different from $\mathbb{A}_{\mathbb{C}}^{2}$
is isomorphic to the quotient $S_{m,q}$ of a smooth surface $S_{m}=\left\{ xz=y^{m}-1\right\} \subset\mathbb{A}_{\mathbb{C}}^{3}$,
$m\geq2$, by a free action of the group $\mu_{m}$ of complex $m$-th
roots of unity of the form $(x,y,z)\mapsto(\varepsilon x,\varepsilon^{q}y,\varepsilon^{-1}z)$
where $q\in\left\{ 1,\ldots,m-1\right\} $ and $\gcd(m,q)=1$. Note
that for every $m$ and every such $q$, the involution $(x,y,z)\mapsto(z,y,x)$
of $S_{m}$ descends to isomorphism between $S_{m,q}$ and $S_{m,m-q}$.
The $\mathbb{A}^{1}$-fibration $\mathrm{pr}_{x}:S_{m}\rightarrow\mathbb{A}_{\mathbb{C}}^{1}$
descends to an $\mathbb{A}^{1}$-fibration $\rho_{m,q}:S_{m,q}\rightarrow\mathbb{A}_{\mathbb{C}}^{1}$
having $\rho_{m,q}^{-1}(0)$ as unique degenerate fiber, isomorphic
to $\mathbb{A}_{\mathbb{C}}^{1}$, of multiplicity $m$, whose support
$F_{m,q}$ generates the divisor class group $\mathrm{Cl}(S_{m,q})\simeq\mathbb{Z}_{m}$
of $S_{m,q}$. Furthermore, the $\mu_{m}$-invariant regular $2$-form
$x^{m-q}(\frac{dx\wedge dy}{x})$ on $S_{m}$ descends to a regular
$2$-form vanishing at order $m-q$ along $F_{m,q}$ and nowhere else,
implying that $K_{S_{m,q}}\sim(m-q)F_{m,q}$. It follows that if $m\geq3$,
then the $\mathbb{A}^{1}$-fibrations $\rho_{m,q}$ and $\rho_{m,m-q}$
on $S_{m,q}\simeq S_{m,-q}$ are not equivalent under the action of
$\mathrm{Aut}(S_{m,q})$. Indeed, otherwise there would exist an isomorphism
$\Psi:S_{m,q}\stackrel{\sim}{\rightarrow}S_{m,m-q}$ and an automorphism
$\psi$ of $\mathbb{A}_{\mathbb{C}}^{1}$ fixing the origin such that
$\rho_{m,m-q}\circ\Psi=\psi\circ\rho_{m,q}$, and we would have the
relation $(m-q)F_{m,q}=qF_{m,q}$ in $\mathrm{Cl}(S_{m,q})$, in contradiction
with the fact that $F_{m,q}$ has order $m$ in $\mathrm{Cl}(S_{m,q})$. 
\end{rem}

\section{Automorphisms of $\mathbb{P}^{2}\setminus Q$ and exotic affine lines }

In this section, we fix a base field $k$ of arbitrary characteristic
$p\geq0$. Recall that a smooth $k$-rational conic $Q\subset\mathbb{P}_{k}^{2}$
is projectively equivalent to that $Q_{0}\subset\mathbb{P}_{k}^{2}$
defined by the equation $q_{0}=xz+y^{2}=0$ and that the induced action
on $Q_{0}$ of the stabilizer $\mathrm{Aut}(\mathbb{P}_{k}^{2},Q_{0})$
of $Q_{0}$ in $\mathrm{Aut}(\mathbb{P}_{k}^{2})$ is transitive on
the set of $k$-rational points of $Q_{0}$. We let $S_{0}=\mathbb{P}^{2}\setminus Q_{0}$,
$p_{0}=\left[0:0:1\right]$ and we denote by 
\[
\rho_{0}:S_{0}\rightarrow\mathbb{A}_{k}^{1}=\mathrm{Spec}(k[t]),\quad[x:y:z]\mapsto\frac{x^{2}}{q_{0}}
\]
the $\mathbb{A}^{1}$-fibration induced by the restriction to $S_{0}$
of the rational map $\overline{\rho}_{0}:\mathbb{P}_{k}^{2}\dashrightarrow\mathbb{P}_{k}^{1}$
defined by the pencil $\mathcal{P}_{p_{0}}\subset\left|\mathcal{O}_{\mathbb{P}_{k}^{2}}(2)\right|$
generated by $Q_{0}$ and twice its tangent line $T_{p_{0}}Q_{0}$
at $p_{0}$. We denote by $\mathrm{Aut}(S_{0},\rho_{0})$ the group
of $k$-automorphisms of $S_{0}$ preserving $\rho_{0}$ globally,
that is, automorphisms $\Psi\in\mathrm{Aut}_{k}(S_{0})$ for which
there exists $\psi_{\rho_{0}}\in\mathrm{Aut}(\mathbb{A}_{k}^{1})$
such that $\rho_{0}\circ\Psi=\psi_{\rho_{0}}\circ\rho_{0}$.

\subsection{Automorphisms of $S_{0}$ }

This subsection is devoted to the proof of the following more precise
version of Theorem \ref{thm:MainTh2}. 
\begin{prop}
\label{prop:Automorphisms} With the notation above, the following
hold:

1) The group $\mathrm{Aut}_{k}(S_{0})$ is isomorphic to the free
product of $\mathrm{Aut}(\mathbb{P}_{k}^{2},Q_{0})\mid_{S_{0}}$ and
$\mathrm{Aut}(S_{0},\rho_{0})$ amalgamated along their intersection. 

2) The group $\mathrm{Aut}(\mathbb{P}_{k}^{2},Q_{0})\subset\mathrm{PGL}_{3}(k)$
is isomorphic to $\mathrm{PGL}_{2}(k)$, generated by the following
automorphisms:

$\quad$ a) $[x:y:z]\mapsto[x:y+bx:z-2by-b^{2}x]$, $b\in k$,

$\quad$ b) $[x:y:z]\mapsto[ax:y:a^{-1}z]$, $a\in k^{*}$, 

$\quad$ c) $[x:y:z]\mapsto[z:-y:x]$. 

3) The group $\mathrm{Aut}(S_{0},\rho_{0})$ is generated by the restrictions
to $S_{0}$ of birational endomorphisms of $\mathbb{P}_{k}^{2}$ of
the form 
\[
[x:y:z]\mapsto\left[x:y+s(\frac{x^{2}}{q_{0}})x:z-2ys(\frac{x^{2}}{q_{0}})-xs(\frac{x^{2}}{s_{0}})^{2}\right],\qquad s\in k[t]
\]
and of the elements of the subgroup $\mathrm{Aut}(\mathbb{P}^{2},Q_{0},p_{0})$
of $\mathrm{Aut}(\mathbb{P}_{k}^{2},Q_{0})$ consisting of automorphisms
fixing the point $p_{0}\in Q_{0}$.
\end{prop}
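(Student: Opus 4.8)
The plan is to prove Proposition \ref{prop:Automorphisms} by first establishing parts 2) and 3), which are essentially explicit computations, and then deducing part 1) from an amalgamated-product criterion applied to the $\mathbb{A}^{1}$-fibration $\rho_{0}$ together with the results of Section 2. For part 2), I would write an element of $\mathrm{Aut}(\mathbb{P}_{k}^{2},Q_{0})\subset\mathrm{PGL}_{3}(k)$ as a matrix $M$ and impose that $M$ preserves the quadratic form $q_{0}=xz+y^{2}$ up to scalar; this identifies $\mathrm{Aut}(\mathbb{P}_{k}^{2},Q_{0})$ with $\mathrm{PO}_{3}(k)\cong\mathrm{PGL}_{2}(k)$ via the standard $2$-uple embedding $\mathbb{P}^{1}\hookrightarrow\mathbb{P}^{2}$, $[u:v]\mapsto[u^{2}:uv:v^{2}]$ (after a suitable change so that the conic is the image of this embedding). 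Concretely, the listed generators a), b), c) are the images of the elementary matrices $\left(\begin{smallmatrix}1 & 0\\ b & 1\end{smallmatrix}\right)$, $\left(\begin{smallmatrix}a & 0\\ 0 & 1\end{smallmatrix}\right)$ scaled appropriately, and $\left(\begin{smallmatrix}0 & 1\\ -1 & 0\end{smallmatrix}\right)$; since these generate $\mathrm{PGL}_{2}(k)$, a) b) c) generate $\mathrm{Aut}(\mathbb{P}_{k}^{2},Q_{0})$. One checks directly that each preserves $q_{0}$: this is the one genuinely computational bit, but it is routine.

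For part 3), the key observation is that $\rho_{0}$ is the restriction of the pencil $\mathcal{P}_{p_{0}}$, so an element of $\mathrm{Aut}(S_{0},\rho_{0})$ induces an automorphism $\psi_{\rho_{0}}$ of the base $\mathbb{A}^{1}=\mathrm{Spec}\,k[t]$, hence $t\mapsto\alpha t+\beta$. The subgroup fixing the generic fiber of $\rho_{0}$ acts on the generic fiber, which is an affine line over $k(t)$; its automorphisms over $k(t)$ are of the form $\eta\mapsto c\eta+d$ with $c\in k(t)^{*}$, $d\in k(t)$. Extendability across all of $S_{0}$ (and the requirement that $Q_{0}$ be preserved, equivalently that the birational map be an isomorphism outside $Q_{0}$) forces, after analyzing the behavior along the fibers, that $c$ is a constant and $d=s(t)$ is a polynomial, yielding precisely the de Jonquières-type maps displayed in 3) composed with elements of $\mathrm{Aut}(\mathbb{P}^{2},Q_{0},p_{0})$; the $\mu_{m}$-type scalar ambiguities present in the general $\mathbb{Q}$-acyclic case are absent here because $\mathrm{Cl}(S_{0})\simeq\mathbb{Z}_{2}$ and $K_{S_{0}}$ is torsion of the right parity. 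So one writes the generic-fiber automorphism, checks it propagates to the displayed formula, and checks the formula is indeed a birational endomorphism of $\mathbb{P}_{k}^{2}$ restricting to an automorphism of $S_{0}$ (direct substitution using $q_{0}(x,y+sx,z-2ys-xs^{2})=q_{0}(x,y,z)$).

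For part 1), the strategy is the standard one for proving that an automorphism group of a surface carrying a unique $\mathbb{A}^{1}$-fibration up to equivalence is an amalgamated product along two naturally occurring subgroups: $G_{1}=\mathrm{Aut}(\mathbb{P}_{k}^{2},Q_{0})\mid_{S_{0}}$ (linear maps) and $G_{2}=\mathrm{Aut}(S_{0},\rho_{0})$ (fibration-preserving maps). First I would show $G_{1}$ and $G_{2}$ generate $\mathrm{Aut}_{k}(S_{0})$: given $\Psi\in\mathrm{Aut}_{k}(S_{0})$, the fibration $\rho_{0}\circ\Psi^{-1}$ is again an $\mathbb{A}^{1}$-fibration on $S_{0}$, so by the Corollary in Section 2 (uniqueness of the $\mathbb{A}^{1}$-fibration up to automorphism, valid after base change to $\mathbb{C}$ and descent as in that section — here one must be careful that the Corollary is stated in characteristic $0$, so for part 1) in arbitrary characteristic one needs instead the characteristic-free statement that any two completions by $1$-standard chains with the same $E$-subchain type $[-2,-2,-2]$ are linked by elementary transformations, i.e. the Danilov–Gizatullin-type argument of $\S$\ref{subsec:Trees-Chains}(iii) together with $\S$\ref{subsec:Fibrations}) there is $\Phi\in\mathrm{Aut}_{k}(S_{0})$ conjugating $\rho_{0}$ to $\rho_{0}\circ\Psi^{-1}$, and modifying $\Psi$ by $\Phi$ reduces to the case $\Psi\in G_{2}$; and one shows such a $\Phi$ can be chosen in the subgroup generated by $G_{1}$ and $G_{2}$ by tracking how completions by $1$-standard chains are related by elementary transformations centered on the section, each of which is realized by a composition of a linear map and a de Jonquières map. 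Then for the amalgam structure, I would exhibit the action of $\mathrm{Aut}_{k}(S_{0})$ on the tree associated to the family of $\mathbb{A}^{1}$-fibrations (equivalently, on completions by $1$-standard chains, or on the Danilov–Gizatullin graph), show the action is without inversion, that $G_{1}$ and $G_{2}$ are the stabilizers of an edge's two endpoints, and that the quotient is a single edge; Bass–Serre theory then gives $\mathrm{Aut}_{k}(S_{0})=G_{1}*_{G_{1}\cap G_{2}}G_{2}$. The main obstacle I anticipate is precisely the bookkeeping in this last step — setting up the correct graph/tree on which $\mathrm{Aut}_{k}(S_{0})$ acts so that edge and vertex stabilizers are exactly $G_{1}\cap G_{2}$, $G_{1}$, $G_{2}$, and verifying transitivity on vertices of each type and triviality of the quotient graph; this requires a careful analysis of which birational modifications connect the distinguished completions, parallel to the Jung–van der Kulk situation for $\mathbb{A}^{2}$ but with the subchain $E$ of type $[-2,-2,-2]$ playing the role of the point at infinity.
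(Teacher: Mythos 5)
Your parts 2) and 3) follow essentially the paper's route: part 2) is the classical identification of $\mathrm{Aut}(\mathbb{P}_{k}^{2},Q_{0})$ with $\mathrm{PGL}_{2}(k)$ (the paper uses the conjugation representation on trace-zero $2\times2$ matrices rather than the Veronese embedding, which is immaterial), and part 3) proceeds, as in the paper, by restricting an element of $\mathrm{Aut}(S_{0},\rho_{0})$ to $S_{0}\setminus\rho_{0}^{-1}(0)\simeq\mathrm{Spec}(k[Y,t^{\pm1}])$ and using extendability to pin down its shape. Note, however, that ``$c$ constant and $d$ polynomial'' is only a necessary condition and does not yet yield the stated generators: for instance $(t,Y)\mapsto(t,2Y)$ satisfies it but is not induced by any automorphism of $S_{0}$, because the extension to $\mathbb{P}_{k}^{2}$ contracts the line $\{x=0\}$. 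The real content of 3) is precisely this contraction analysis (in the fiber-preserving subgroup it forces $n=0$, $\lambda=\pm1$, $s\in k[t]$, and in general it links the fiber scalar to the base scalar), so it must be carried out rather than alluded to; still, this is the same method as the paper's.

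The genuine gap is in part 1). Your plan defers all of its substance to the construction of a tree on which $\mathrm{Aut}_{k}(S_{0})$ acts with vertex stabilizers $G_{1}$, $G_{2}$ and a single edge as quotient; but proving that the relevant graph is connected and simply connected is exactly equivalent to the generation statement and to the absence of extra relations, i.e.\ to the theorem itself, and you give no mechanism for establishing it. Your reduction for generation is moreover circular and uses an unavailable input: the uniqueness of the $\mathbb{A}^{1}$-fibration up to automorphism is proved in the paper only in characteristic zero (and there as a consequence of the main theorem), whereas Proposition~\ref{prop:Automorphisms} is over arbitrary $k$; the Danilov--Gizatullin invariance of the boundary type $[-2,-2,-2]$ only says all $1$-standard completions of $S_{0}$ have the same boundary type, not that any two are linked by moves realized by elements of $G_{1}$ and $G_{2}$; and you then need the conjugating automorphism $\Phi$ to lie in $\langle G_{1},G_{2}\rangle$, which is part of what is being proved. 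The missing ingredient --- and what the paper actually uses --- is the factorization theorem of \cite{BD11} (Theorem 3.0.2): every strictly birational map between $1$-standard completions of $S_{0}$ factors, essentially uniquely, into fibered modifications and reversions. In the present situation fibered modifications descend to de Jonqui\`eres-type birational endomorphisms of $\mathbb{P}_{k}^{2}$ preserving a pencil $\mathcal{P}_{p}$, hence (after conjugation by linear maps) to elements of $\mathrm{Aut}(S_{0},\rho_{0})$, while reversions descend to biregular automorphisms of $(\mathbb{P}_{k}^{2},Q)$; generation follows from the existence of the factorization and the amalgam structure from its uniqueness. Your ``elementary transformations centered on the section'' do not account for reversions and do not by themselves produce either the generation statement or the tree your Bass--Serre argument requires, so as it stands part 1) is not proved.
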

\begin{proof}
1) The assertion already appeared in \cite[\S 4.1.3]{DuLa} in the
case $k=\mathbb{C}$. It extends readily to the case of an arbitrary
base field $k$ thanks to the techniques developed in \cite{BD11},
so we just sketch the argument for the convenience of the reader.
Every automorphism $\varphi$ of $S_{0}$ uniquely extends to a birational
map $\overline{\varphi}:\mathbb{P}_{k}^{2}\dashrightarrow\mathbb{P}_{k}^{2}$.
If $\overline{\varphi}$ is biregular then it is an automorphism preserving
$S_{0}$, hence its complement $Q_{0}$, and so $\overline{\varphi}\in\mathrm{Aut}(\mathbb{P}_{k}^{2},Q_{0})$.
Otherwise if $\overline{\varphi}$ is strictly birational, it lifts
in a unique way to a strictly birational endormorphism of the $1$-standard
completion $(X_{0},B_{0})$ of $S_{0}$ by a chain $Q_{0}\vartriangleright C\vartriangleright E$
of type $[0,-1,-2,-2,-2]$ obtained by taking the minimal resolution
of $\overline{\rho}_{0}:\mathbb{P}_{k}^{2}\dashrightarrow\mathbb{P}_{k}^{1}$.
By virtue of Theorem 3.0.2 in \cite{BD11} this lift factors in a
unique way into a finite sequence of two particular types of birational
maps between $1$-standard completions $(X_{i},B_{i}=Q_{i}\vartriangleright C_{i}\vartriangleright E_{i})$
of $S_{0}$, called \emph{fibered modifications} and \emph{reversions}.
In our case, a fibered modification $(X_{i-1},B_{i-1})\dashrightarrow(X_{i},B_{i})$
descends through the contractions of the rational sub-chains $C_{i-1}\vartriangleright E_{i-1}$
and $C_{i}\vartriangleright E_{i}$ in $X_{i-1}$ and $X_{i}$ onto
$k$-rational points $x_{i-1}\in Q_{i-1}$ and $y_{i}\in Q_{i}$ to
a birational endomorphism $\overline{\varphi}_{i}:(\mathbb{P}_{k}^{2},Q_{i-1})\dashrightarrow(\mathbb{P}_{k}^{2},Q_{i})$
with the following properties:

a) $x_{i-1}$ and $y_{i}$ are the unique proper base points of $\overline{\varphi}_{i}$and
$\overline{\varphi}_{i}^{-1}$ respectively,

b) $\overline{\varphi}_{i}$ maps the pencil $\mathcal{P}_{x_{i-1}}\subset\left|\mathcal{O}_{\mathbb{P}_{k}^{2}}(2)\right|$
generated by the smooth $k$-rational conic $Q_{i-1}$ and $2T_{x_{i-1}}Q_{i-1}$
onto the pencil $\mathcal{P}_{y_{i}}\subset\left|\mathcal{O}_{\mathbb{P}_{k}^{2}}(2)\right|$
generated by the smooth $k$-rational conic $Q_{i}$ and $2T_{y_{i}}Q_{i}$,

c) $\overline{\varphi}_{i}$ restricts to an isomorphism between $\mathbb{P}_{k}^{2}\setminus Q_{i-1}$
and $\mathbb{P}_{k}^{2}\setminus Q_{i}$. 

\noindent On the other hand, the definition of a reversion $(X_{i-1},B_{i-1})\dashrightarrow(X_{i},B_{i})$
(see \cite[Definition 2.3.1]{BD11}) implies that such a map descends
via the same contractions to an isomorphism of pairs $(\mathbb{P}_{k}^{2},Q_{i-1})\stackrel{\sim}{\rightarrow}(\mathbb{P}_{k}^{2},Q_{i})$.
As a consequence, every strictly birational endomorphism $\overline{\varphi}:\mathbb{P}_{k}^{2}\dashrightarrow\mathbb{P}_{k}^{2}$
restricting to an automorphism of $S_{0}=\mathbb{P}_{k}^{2}\setminus Q_{0}$
admits a decomposition into a finite sequence of strictly birational
maps of pairs 
\[
\overline{\varphi}=\overline{\varphi}_{n}\circ\cdots\circ\overline{\varphi}_{2}\circ\overline{\varphi}_{1}:(\mathbb{P}_{k}^{2},Q_{0})\stackrel{\overline{\varphi}_{1}}{\dashrightarrow}(\mathbb{P}_{k}^{2},Q_{1})\stackrel{\overline{\varphi}_{2}}{\dashrightarrow}\cdots\stackrel{\overline{\varphi}_{n}}{\dashrightarrow}(\mathbb{P}_{k}^{2},Q_{n})=(\mathbb{P}_{k}^{2},Q_{0})
\]
satisfying properties a), b) and c) above. Now for every $i=1,\ldots,n$,
there exists an automorphism $\alpha_{i}:(\mathbb{P}_{k}^{2},Q_{0})\stackrel{\sim}{\rightarrow}(\mathbb{P}_{k}^{2},Q_{i-1})$
of $\mathbb{P}_{k}^{2}$ mapping $Q_{0}$ onto $Q_{i-1}$ and $p_{0}$
onto the proper base point $x_{i-1}$ of $\overline{\varphi}_{i}$
and an automorphism $\beta_{i}:(\mathbb{P}_{k}^{2},Q_{0})\stackrel{\sim}{\rightarrow}(\mathbb{P}_{k}^{2},Q_{i})$
mapping $Q_{0}$ onto $Q_{i}$ and $p_{0}$ onto the proper base point
$y_{i}$ of $\overline{\varphi}_{i}^{-1}$. \[\xymatrix{\cdots \ar@{-->}[r]^-{\overline{\varphi}_{i-1}} & (\mathbb{P}^2_k,Q_{i-1}) \ar@<-1ex>[d]_{\beta^{-1}_{i-1}} \ar@{-->}[r]^-{\overline{\varphi}_{i}} & (\mathbb{P}^2_k,Q_{i}) \ar@<-1ex>[d]_{\beta^{-1}_{i}} \ar@{-->}[r]^-{\overline{\varphi}_{i+1}}  & \cdots \\  \cdots \ar@{-->}[r] & (\mathbb{P}^2_k,Q_{0}) \ar@{-->}[r]^-{\overline{\psi}_{i}}  \ar@<-1ex>[u]_{\alpha_{i}} & (\mathbb{P}^2_k,Q_{0})\ar@<-1ex>[u]_{\alpha_{i+1}} \ar@{-->}[r] & \cdots }\]

The composition $\overline{\psi}_{i}=\beta_{i}^{-1}\circ\overline{\varphi}_{i}\circ\alpha_{i}$
is then a birational map of pairs $(\mathbb{P}_{k}^{2},Q_{0})\dashrightarrow(\mathbb{P}_{k}^{2},Q_{0})$
mapping the pencil $\mathcal{P}_{p_{0}}$ onto itself, and restricting
to an automorphism $\psi_{i}$ of $S_{0}=\mathbb{P}^{2}\setminus Q_{0}$
preserving the $\mathbb{A}^{1}$-fibration $\rho_{0}:S_{0}\rightarrow\mathbb{A}^{1}$
globally. Writing 
\begin{align*}
\overline{\varphi} & =\overline{\varphi}_{n}\circ\cdots\circ\overline{\varphi}_{2}\circ\overline{\varphi}_{1}=(\beta_{n}\circ\overline{\psi}_{n}\circ\alpha_{n}^{-1})\circ\cdots\circ(\beta_{2}\circ\overline{\psi}_{2}\circ\alpha_{2}^{-1})\circ(\beta_{1}\circ\overline{\psi}_{1}\circ\alpha_{1}^{-1})\\
 & =\beta_{n}\circ\overline{\psi}_{n}\circ(\alpha_{n}^{-1}\circ\beta_{n-1})\circ\cdots\circ(\alpha_{3}^{-1}\circ\beta_{2})\circ\overline{\psi}_{2}\circ(\alpha_{2}^{-1}\circ\beta_{1})\circ\overline{\psi}_{1}\circ\alpha_{1}^{-1}
\end{align*}
we obtain a decomposition of $\overline{\varphi}$ into an alternating
sequence of automorphisms $\beta_{n}$, $(\alpha_{i+1}^{-1}\circ\beta_{i})_{i=1,\ldots n-1}$,
$\alpha_{1}^{-1}$ of the pair $(\mathbb{P}_{k}^{2},Q_{0})$ and birational
endomorphisms $\overline{\psi}_{i}$ of $\mathbb{P}_{k}^{2}$ restricting
to elements of the group $\mathrm{Aut}(S_{0},\rho_{0})$. This shows
that $\mathrm{Aut}(S_{0})$ is generated by the subgroups $\mathrm{Aut}(\mathbb{P}_{k}^{2},Q_{0})|_{S_{0}}$
and $\mathrm{Aut}(S_{0},\rho_{0})$. The existence of an amalgamated
product structure follows from general properties of the above decompositions
into birational maps, see \cite[\S 3, Proposition 16]{DuLa} and \cite[Lemma 3.2.4]{BD11}. 

2) The description of the generators of $\mathrm{Aut}(\mathbb{P}_{k}^{2},Q_{0})$
follows from the classical faithful representation of $\mathrm{Aut}(Q_{0})=\mathrm{PGL}_{2}(k)$
as the special orthogonal group $\mathrm{SO_{3}(q_{0})}\subset\mathrm{GL}_{3}(k)$
of the quadratic form $q_{0}=xz+y^{2}$, defined by the action $\sigma:\mathrm{PGL}_{2}(k)\times T\rightarrow T$
of $\mathrm{PGL}_{2}(k)$ by conjugation on the space $T\simeq k^{3}$
of $2\times2$ matrices of trace zero. Explicitly, the representation
$\gamma:\mathrm{PGL}_{2}(k)\rightarrow SO_{3}(q_{0})$ is given by
\[
\mathrm{PGL}_{2}(k)\ni\begin{bmatrix}a & b\\
c & d
\end{bmatrix}\mapsto\frac{1}{ad-bc}\begin{bmatrix}a^{2} & -2ab & -b^{2}\\
-ac & ad+bc & bd\\
-c^{2} & 2cd & d^{2}
\end{bmatrix}\in\mathrm{SO}_{3}(q_{0}),
\]
and the listed generators of $\mathrm{Aut}(\mathbb{P}_{k}^{2},Q_{0})$
coincide with the respective images in $\mathrm{PGL}_{3}(k)$ of the
generators 
\[
\begin{bmatrix}1 & -b\\
0 & 1
\end{bmatrix},\,b\in k,\quad\begin{bmatrix}a & 0\\
0 & 1
\end{bmatrix},\,a\in k^{*}\quad\textrm{and }\begin{bmatrix}0 & 1\\
1 & 0
\end{bmatrix}
\]
of $\mathrm{PGL}_{2}(k)$ by $\gamma$. 

3) The generators of $\mathrm{Aut}(S_{0},\rho_{0})$ can be determined
as follows. The correspondence which maps every $\Psi\in\mathrm{Aut}(S_{0},\rho_{0})$
to the unique element $\psi_{\rho_{0}}\in\mathrm{Aut}(\mathbb{A}_{k}^{1})$
such that $\rho_{0}\circ\Psi=\psi_{\rho_{0}}\circ\rho_{0}$ defines
a group homomorphism $d:\mathrm{Aut}(S_{0},\rho_{0})\rightarrow\mathrm{Aut}(\mathbb{A}_{k}^{1})$.
Since $\rho_{0}^{-1}(0)$ is the unique degenerate fiber of $\rho_{0}$,
$\psi_{\rho_{0}}$ necessarily fixes the origin, hence belongs to
the sub-torus $\mathbb{G}_{m,k}\times\{0\}$ of $\mathrm{Aut}(\mathbb{A}_{k}^{1})=\mathbb{G}_{m,k}\ltimes\mathbb{G}_{a,k}$.
Conversely, the existence of the homomorphism $\mathbb{G}_{m,k}\rightarrow\mathrm{Aut}(S_{0},\rho_{0})\cap\mathrm{Aut}(\mathbb{P}_{k}^{2},Q_{0},p_{0})$,
$a\mapsto[ax:y:a^{-1}z]$ implies that we have a split exact sequence
\[
0\rightarrow\mathrm{Aut}_{0}(S_{0},\rho_{0})\rightarrow\mathrm{Aut}(S_{0},\rho_{0})\stackrel{d}{\rightarrow}\mathbb{G}_{m,k}\rightarrow0,
\]
and it remains to describe the elements of the group $\mathrm{Aut}_{0}(S_{0},\rho_{0})$
of automorphisms of $S_{0}$ preserving $\rho_{0}$ fiber wise. Every
such automorphism $\Psi$ restricts to an automorphism of the complement
of $\rho_{0}^{-1}(0)$ in $S_{0}$. Under the isomorphisms
\begin{align*}
S_{0}\setminus\rho_{0}^{-1}(0) & \simeq\mathbb{P}^{2}\setminus(Q_{0}\cup T_{p_{0}}Q_{0})\\
 & \simeq\mathrm{Spec}(k[Y,Z])\setminus\{Z+Y^{2}=0\}\\
 & \simeq\mathrm{Spec}(k[Y,t^{\pm1}]),
\end{align*}
where $Y=y/x$, $Z=z/x$ and $t=x^{2}q_{0}^{-1}=(Z+Y^{2})^{-1}$,
$\Psi|_{S_{0}\setminus\rho_{0}^{-1}(0)}$ coincides with a $\mathrm{Spec}(k[t^{\pm1}])$-automorphism
of $\mathrm{Spec}(k[Y,t^{\pm1}])$ which is thus of the form $(t,Y)\mapsto(t,\lambda t^{n}Y+s(t))$
for some $\lambda\in k^{*}$, $n\in\mathbb{Z}$ and $s(t)\in k[t^{\pm1}]$.
It follows that $\Psi$ is induced by the restriction of a birational
endomorphism $\overline{\Psi}$ of $\mathbb{P}_{k}^{2}$ of the form
\[
\left[x:y:z\right]\mapsto[x:\lambda(\frac{x^{2}}{q_{0}})^{n}y+s(\frac{x^{2}}{q_{0}})x:z+(1-\lambda^{2}(\frac{x^{2}}{q_{0}})^{2n})\frac{y^{2}}{x}-2\lambda(\frac{x^{2}}{q_{0}})^{n}s(\frac{x^{2}}{q_{0}})y-s(\frac{x^{2}}{q_{0}})^{2}x].
\]
 If $(1-\lambda^{2}(\frac{x^{2}}{q_{0}})^{2n})\neq0$ or $s(t)\in k[t^{\pm1}]\setminus k[t]$
then such a birational endomorphism $\overline{\Psi}$ contracts the
tangent line $T_{p_{0}}Q_{0}=\left\{ x=0\right\} $ to the point $[0:0:1]$,
hence is not the extension of any automorphism of $S_{0}$. So $n=0$,
$\lambda=\pm1$, $s(t)\in k[t]$ necessarily. Conversely, every $\overline{\Psi}$
of the form 
\[
[x:y:z]\mapsto\left[x:\lambda y+s(\frac{x^{2}}{q_{0}})x:z-2\lambda ys(\frac{x^{2}}{q_{0}})-xs(\frac{x^{2}}{s_{0}})^{2}\right]
\]
where $\lambda=\pm1$ and $s\in k[t]$ is the composition of an element
$[x:y:z]\mapsto[x:\pm y:z]$ of $\mathrm{Aut}(\mathbb{P}_{k}^{2},Q_{0},p_{0})$
and of a birational endomorphism of the desired type, which indeed
restricts to an automorphism of $S_{0}=\mathbb{P}^{2}\setminus Q_{0}$. 
\end{proof}

\subsection{Exotic affine lines in positive characteristic}

A well known consequence of the structure of $\mathrm{Aut}(\mathbb{A}_{k}^{2})$
is that if an embedded affine line $A\simeq\mathbb{A}_{k}^{1}$ in
$\mathbb{A}_{k}^{2}$ with parametrization $t\mapsto(x(t),y(t))$
belongs to the $\mathrm{Aut}(\mathbb{A}_{k}^{2})$-orbit of the coordinate
line $\left\{ x=0\right\} $, then either $\deg_{t}(x(t))$ divides
$\deg_{t}(y(t))$ or $\deg_{t}(y(t))$ divides $\deg_{t}(x(t))$.
Letting $L_{0}=\left\{ x=0\right\} $ and $L_{1}=\{x^{2}-q_{0}=0\}$
be the reduced fibers of the $\mathbb{A}^{1}$-fibration 
\[
\rho_{0}:S_{0}\rightarrow\mathbb{A}_{k}^{1}=\mathrm{Spec}(k[t]),\quad[x:y:z]\mapsto\frac{x^{2}}{q_{0}}
\]
over the closed points $0$ and $1$ of $\mathbb{A}_{k}^{1}$ respectively,
the description of $\mathrm{Aut}(S_{0})$ given in Proposition \ref{prop:Automorphisms}
leads to the following analogue for closed embeddings of $\mathbb{A}_{k}^{1}$
in $S_{0}$: 
\begin{lem}
\label{lem:degreduction} Let $j:\mathbb{A}_{k}^{1}\hookrightarrow S_{0}$,
$t\mapsto[x(t):y(t):z(t)]$ be a closed embedding with image $A$.
If $A$ belongs to the $\mathrm{Aut}(S_{0})$-orbit of $L_{0}$ or
$L_{1}$, then up to composition by the involution $[x:y:z]\mapsto[z:y:x]$,
the following hold:

a) $\deg_{t}(x(t))<\deg_{t}(y(t))<\deg_{t}(z(t)),$ 

b) If $\deg_{t}(x(t))\neq-\infty$, then it divides $\deg_{t}(y(t))$
and $\deg_{t}(z(t))$. 
\end{lem}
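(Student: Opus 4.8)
The plan is to combine the presentation of $\mathrm{Aut}(S_0)$ from Proposition~\ref{prop:Automorphisms} with the single observation that $A$, and every curve in its $\mathrm{Aut}(S_0)$-orbit, is disjoint from $Q_0$, so that along any reduced homogeneous parametrization $t\mapsto[x(t):y(t):z(t)]$ the form $q_0(x,y,z)=xz+y^2$ is a nonzero constant $c$. From $xz=c-y^2$ one reads off that either $y$ is constant, which forces $xz$ to be constant and hence $A=L_0$ up to the involution of the statement, or else $y$ is non-constant and $\deg x+\deg z=2\deg y$; exchanging $x$ and $z$ if needed so that $\deg x\le\deg z$ then gives $\deg x\le\deg y\le\deg z$, which is assertion~(a), and makes $\deg x\mid\deg z$ equivalent to $\deg x\mid\deg y$, so that all of (b) reduces to proving the single divisibility $\deg x\mid\deg y$.

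For that divisibility I would induct on a minimal expression of an automorphism $\Phi$ with $A=\Phi(L_\varepsilon)$, $\varepsilon\in\{0,1\}$, as an alternating word in the two factors $\mathrm{Aut}(\mathbb{P}_k^2,Q_0)|_{S_0}\cong\mathrm{PGL}_2(k)$ and $\mathrm{Aut}(S_0,\rho_0)$. Using the Bruhat decomposition $\mathrm{PGL}_2(k)=B\sqcup B\iota B$ relative to the Borel subgroup $B=\mathrm{Aut}(\mathbb{P}_k^2,Q_0,p_0)$, the inclusion $B\subset\mathrm{Aut}(S_0,\rho_0)$, and the fact from Proposition~\ref{prop:Automorphisms} that $\mathrm{Aut}(S_0,\rho_0)$ is generated over $B$ by the shears $\sigma_s\colon[x:y:z]\mapsto[x:y+s(\rho_0)x:z-2ys(\rho_0)-xs(\rho_0)^2]$, $s\in k[t]$, one may reorganize $\Phi$ into an alternating product of shears $\sigma_{s_i}$ and copies of the involution $\iota$, modulo letters in $B$. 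The base case is the direct check of (a) and (b) for $L_0$ (parametrized by $t\mapsto[0:1:t]$) and $L_1$ (parametrized by $t\mapsto[1:t:1-t^2]$), together with the observation that the generators $[x:y:z]\mapsto[ax:y:a^{-1}z]$ and $[x:y:z]\mapsto[x:y+bx:z-2by-b^2x]$ of $B$ preserve the conclusion: the first leaves the whole degree profile unchanged, and the second does too because $\deg x$ is already the smallest of the three degrees and divides $\deg y$; reducedness survives since these are linear automorphisms of $\mathbb{P}_k^2$.

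In the inductive step I prepend one shear $\sigma_s$ to a reduced parametrization $(x,y,z)$ already in the good form. Since $\rho_0=x^2/c$ is an honest polynomial, $\sigma_s(x,y,z)=\bigl(x,\ y+s(x^2/c)x,\ z-2ys(x^2/c)-xs(x^2/c)^2\bigr)$ is again a polynomial triple with trivial gcd, because any common factor divides $x$, hence $y$, hence $z$. All the newly introduced summands are polynomials in $x^2$ times $x$ or times $y$, so all their degrees are multiples of $\deg x$; together with $\deg x\mid\deg y$ from the induction hypothesis this yields $\deg x=\deg x'\mid\deg y'$ whenever $\deg\bigl(s(x^2/c)x\bigr)\neq\deg y$, and the equality case is harmless because that equality already forces $\deg x\mid\deg y$. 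An involution letter merely exchanges the first and third coordinates, replacing a parametrization adapted to $x$ by one adapted to $z$, which is exactly what the ``up to the involution'' clause records; reversing the relation $\deg x+\deg z=2\deg y$ then transfers the divisibility to $\deg z$. Absorbing the remaining $B$-letters as in the base case closes the induction.

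The delicate point will be controlling leading-coefficient cancellations: composing with a shear, or with the unipotent part of $B$ in the position right after an involution, could a priori collapse the degree profile or pull $\deg y'$ or $\deg z'$ below its expected value, and ruling this out uses both that the parametrization is a closed embedding and that $q_0(x,y,z)$ stays constant, which pins the behaviour of the leading coefficients at the point $\overline{A}\cap Q_0$. The cleanest implementation is probably to run the whole argument on the $1$-standard completion of $S_0$, strengthening the induction hypothesis to also record $\widetilde A^2$ and the component of the boundary chain met by $\widetilde A$, and to invoke the fibered-modification/reversion calculus of \cite{BD11} used in the proof of Proposition~\ref{prop:Automorphisms}, under which these numerical data change in a completely prescribed manner.
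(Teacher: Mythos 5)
Your overall plan is the same as the paper's: check the parametrizations $t\mapsto[0:1:t]$ and $t\mapsto[1:t:1-t^{2}]$ of $L_{0}$ and $L_{1}$, and then propagate the degree conditions along the generators of $\mathrm{Aut}(S_{0})$ furnished by Proposition \ref{prop:Automorphisms}. Your preliminary observation that $q_{0}(x(t),y(t),z(t))$ is a nonzero constant for a reduced parametrization of a curve avoiding $Q_{0}$, hence $\deg_{t}x+\deg_{t}z=2\deg_{t}y$ when $y$ is non-constant, is correct and a useful addition (it does reduce (b) to the single divisibility $\deg_{t}x\mid\deg_{t}y$). But note it only gives the \emph{weak} inequalities $\deg_{t}x\le\deg_{t}y\le\deg_{t}z$ after swapping, whereas (a) is strict; strictness has to come out of the induction as well, and in fact it is exactly the strictness that makes the $B$-letters harmless in your base-case discussion.

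The genuine gap is in the inductive step, which is the whole content of the lemma, and your proposal does not close it. First, your dismissal of the cancellation case for a shear ("the equality case is harmless because that equality already forces $\deg x\mid\deg y$") is not an argument: if $\deg_{t}\bigl(s(x^{2}/c)x\bigr)=\deg_{t}y$ and the leading terms cancel, then $\deg_{t}y'$ drops strictly below that common value, and nothing in the hypothesis forces the new degree to remain a multiple of $\deg_{t}x$, nor to stay above it; this is precisely the point you later concede is "delicate", and deferring it to a "probably cleanest implementation" on the $1$-standard completion via the calculus of \cite{BD11} is a plan, not a proof. Second, the letter-by-letter invariance cannot be taken for granted in the form you (and, tersely, the paper) state it: applying the unipotent generator $[x:y:z]\mapsto[x:y+bx:z-2by-b^{2}x]$ to a triple in \emph{reversed} profile destroys the profile altogether. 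Concretely, the image of $L_{1}$ under the involution is parametrized by $(1-t^{2},\,t,\,1)$, and applying the generator with $b=1$ gives $(1-t^{2},\,1+t-t^{2},\,t^{2}-2t)$, a reduced parametrization of a curve in the orbit all three of whose coordinates have degree $2$; so the strict profile is not preserved by an arbitrary single generator following an involution, and your Bruhat reorganization puts you exactly in this configuration. Any correct implementation therefore has to work with a more carefully normalized word (or a finer invariant recording, e.g., the position of $\overline{A}\cap Q_{0}$ and the local intersection data there) rather than naive degree bookkeeping applied generator by generator; since your proposal neither identifies such an invariant nor carries out the cancellation analysis, it does not yet establish the lemma.
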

\begin{proof}
This holds for the parametrizations $t\mapsto[0:1:t]$ and $t\mapsto[1:t:1-t^{2}]$
of $L_{0}$ and $L_{1}$ respectively, and both properties are preserved
under the application of any of the generator of $\mathrm{Aut}(S_{0})$
listed in Proposition \ref{prop:Automorphisms}. 
\end{proof}
As a consequence of the above lemma, we obtain in every characteristic
$p\geq3$ the existence of closed embeddings $j:\mathbb{A}_{k}^{1}\hookrightarrow S_{0}$
whose image does not belong to the $\mathrm{Aut}(S_{0})$-orbit of
$L_{0}$ or $L_{1}$, a phenomenon similar to the failure of the Abhyankar-Moh
Theorem in positive characteristic. Namely, we have the following
family of examples:
\begin{prop}
\label{prop:poschar}Let $k$ be a field of characteristic $p\geq3$.
Then the morphism 
\[
j:\mathbb{A}_{k}^{1}\hookrightarrow S_{0},\,t\mapsto[t^{p^{2}}:t^{p^{2}}(t^{p^{2}+p}+t)+1:-t^{p^{2}}(t^{p^{2}+p}+t)^{2}-2(t^{p^{2}+p}+t)]
\]
is a closed embedding whose image does not belong to the $\mathrm{Aut}(S_{0})$-orbit
of $L_{0}$ or $L_{1}$. 
\end{prop}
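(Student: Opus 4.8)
The statement has two halves --- that $j$ is a closed embedding with image $A$, and that $A$ lies in the $\mathrm{Aut}(S_{0})$-orbit of neither $L_{0}$ nor $L_{1}$ --- and I would argue them independently. The second half is the real content of the example and comes essentially for free from Lemma~\ref{lem:degreduction}. Writing $j(t)=[x(t):y(t):z(t)]$ one reads off at once
\[
\deg_{t}x=p^{2},\qquad\deg_{t}y=2p^{2}+p,\qquad\deg_{t}z=3p^{2}+2p;
\]
since $x,y,z$ have no common factor (the constant term of $y$ is $1$) this is the triple of degrees of the primitive parametrization of $A$, hence an invariant of $A$ together with the chosen ordering of homogeneous coordinates, unaffected by reparametrizations $t\mapsto at+b$. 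Now $p^{2}=\deg_{t}x$ does not divide $\deg_{t}y=p(2p+1)$, because $p\nmid 2p+1$; so property (b) of Lemma~\ref{lem:degreduction} fails for the ordering $[x:y:z]$, while for the reversed ordering $[z:y:x]$ the degrees $3p^{2}+2p>2p^{2}+p>p^{2}$ are strictly decreasing and property (a) fails. Neither ordering satisfies (a) and (b) simultaneously, so by Lemma~\ref{lem:degreduction} the curve $A$ is not in the $\mathrm{Aut}(S_{0})$-orbit of $L_{0}$ or of $L_{1}$.

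For the first half, set $a=t^{p^{2}}$ and $u=t^{p^{2}+p}+t$, so that $j(t)=[a:au+1:-au^{2}-2u]$. Substituting into $q_{0}=xz+y^{2}$ gives $q_{0}(j(t))=-a^{2}u^{2}-2au+(au+1)^{2}=1$, so $j$ has image in $S_{0}$. Because $\deg_{t}z$ strictly exceeds $\deg_{t}x$ and $\deg_{t}y$, the parametrization extends to a morphism $\bar{j}\colon\mathbb{P}^{1}_{k}\to\mathbb{P}^{2}_{k}$ with $\bar{j}(\infty)=[0:0:1]=p_{0}\in Q_{0}$. The projective curve $\bar{A}=\bar{j}(\mathbb{P}^{1}_{k})$ must meet the ample divisor $Q_{0}$, whereas $A=j(\mathbb{A}^{1}_{k})\subset S_{0}$ is disjoint from $Q_{0}$; hence $\bar{A}\cap Q_{0}=\{p_{0}\}$ and $A=\bar{A}\cap S_{0}$ is closed in $S_{0}$. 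It then remains to prove that $j$ is an isomorphism onto $A$, i.e. that the comorphism $k[S_{0}]\to k[t]$ is onto. The mechanism I would use is the characteristic-$p$ identity
\[
t=u-au^{p}+a^{p+2},
\]
immediate from $u^{p}=t^{p^{3}+p^{2}}+t^{p}$ and $a^{p+2}=t^{p^{3}+2p^{2}}$; together with $u=(y-1)/x$ and the description of $k[S_{0}]$ as generated by the degree-one functions $\ell_{1}\ell_{2}/q_{0}$, this is designed to exhibit $t$ as a polynomial in restrictions to $A$ of regular functions on $S_{0}$.

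The delicate step is exactly this last one. Since the first coordinate of $j$ is the pure Frobenius power $t^{p^{2}}$, the subalgebra $k[S_{0}]|_{A}\subset k[t]$ is generated by functions which are polynomial expressions in $a$ and $u$ carrying a built-in parity, and the whole point is to check with care that $t$ itself --- not merely $t^{p^{2}}$ or symmetric combinations of $a$ and $u$ --- actually lies in that subalgebra; this is precisely the monomial bookkeeping that decides whether $j$ is a genuine closed embedding at all. By contrast the remaining ingredients --- the one-line computation of $q_{0}(j(t))$, the homogenisation locating $p_{0}$, and the appeal to Lemma~\ref{lem:degreduction} --- are routine, and the degree computation of the first paragraph plays no further role here beyond identifying the point at infinity.
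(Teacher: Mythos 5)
The orbit-exclusion half of your argument is the same as the paper's (degrees $(p^{2},2p^{2}+p,3p^{2}+2p)$ violate condition b) of Lemma \ref{lem:degreduction} for the given ordering and condition a) for the reversed one), and that part is fine as a conditional statement. The genuine gap is in the embedding half, and you have flagged it yourself: you never prove that the comorphism $k[S_{0}]\rightarrow k[t]$ is onto, you only exhibit the identity $t=u-au^{p}+a^{p+2}$ and say it is ``designed'' to do so. The identity is correct in $k[t]$, but neither $a$ nor $u=(y-1)/x$ is the restriction of a regular function on $S_{0}$: since $\mathcal{O}(S_{0})$ consists of the quotients $F/q_{0}^{m}$ with $F$ homogeneous of degree $2m$, and $q_{0}$ evaluates to $1$ on your parametrization, the subalgebra $k[S_{0}]|_{A}\subset k[t]$ is generated by the six products $x(t)x(t),x(t)y(t),\ldots,z(t)z(t)$. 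What your identity actually proves is that the lift $\tilde{j}:t\mapsto(a,au+1,-au^{2}-2u)$ into the affine quadric $\tilde{S}_{0}=\{xz+y^{2}=1\}$ is a closed embedding --- this is exactly the paper's route, via the chart of $\tilde{S}_{0}$ isomorphic to $\mathbb{A}_{k}^{2}$ and the identity $(v^{p}-x^{p+1})^{p+1}=t^{p^{2}+p}$ --- and it says nothing yet about $j=\pi\circ\tilde{j}$, where $\pi:\tilde{S}_{0}\rightarrow S_{0}$ is the \'etale double cover.

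Moreover, the ``monomial bookkeeping'' you postpone cannot be carried out, because $t$ does not lie in $k[S_{0}]|_{A}$: the generators above cannot separate a parameter $t_{0}$ from $-t_{0}$ whenever the coordinate vector at $-t_{0}$ is minus the vector at $t_{0}$. One computes $x(-t)=-x(t)$ identically, while $y(-t)+y(t)=2(t^{p^{2}+1}+1)$ and $z(-t)+z(t)=-4t^{p^{2}+p}(t^{p^{2}+1}+1)$; hence for every root $t_{0}$ of $t^{p^{2}+1}+1$ (such roots exist in $\overline{k}$, and already in $\mathbb{F}_{p^{2}}$ for $p=3$, where $i^{2}=-1$ gives $j(i)=j(-i)=[1:1:1]$) one has $\tilde{j}(-t_{0})=-\tilde{j}(t_{0})$ and therefore $j(t_{0})=j(-t_{0})$ with $t_{0}\neq-t_{0}$. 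So $j$ is not injective on geometric points, the comorphism is not surjective, and no completion of your sketch can establish that this particular $j$ is a closed embedding. Your instinct that this is ``precisely the step that decides whether $j$ is a genuine closed embedding'' is thus exactly right, and it is also the fragile point of the paper's own proof, where the disjointness of $\tilde{A}$ from its Galois translate under $(x,y,z)\mapsto(-x,-y,-z)$ is asserted without verification; repairing the example requires modifying the parametrization so that the lifted line avoids its image under the involution.
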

\begin{proof}
Once we show that $j$ is indeed a closed embedding, the conclusion
follows immediately from Lemma \ref{lem:degreduction} above. Letting
$\tilde{S}_{0}\subset\mathbb{A}_{k}^{3}=\mathrm{Spec}(k[x,y,z])$
be the smooth affine surface with equation $xz+y^{2}-1=0$, $j$ is
the composition of the morphism 
\[
\tilde{j}:\mathbb{A}_{k}^{1}\rightarrow\tilde{S}_{0},\quad t\mapsto(t^{p^{2}},t^{p^{2}}(t^{p^{2}+p}+t)+1,-t^{p^{2}}(t^{p^{2}+p}+t)^{2}-2(t^{p^{2}+p}+t))
\]
with the \'etale Galois double cover $\pi:\tilde{S}_{0}\rightarrow S_{0}$,
$(x,y,z)\mapsto[x:y:z]$. Letting $\tilde{A}$ be the image of $\tilde{j}$,
$\pi^{-1}(\pi(\tilde{A}))$ is the disjoint union of $\tilde{A}$
with its image by the action $(x,y,z)\mapsto(-x,-y,-z)$ of the Galois
group of $\pi$. So $\pi$ induces an isomorphism between $\tilde{A}$
and the image of $j$. Since $\tilde{S}_{0}$ is affine, every embedding
of $\mathbb{A}_{k}^{1}$ into it is necessarily closed, and hence,
it now suffices to show that $\tilde{j}$ is an embedding. Noting
that $\tilde{A}$ is contained in the complement $V$ of the curve
with equation $\{x=y+1=0\}\subset\tilde{S}_{0}$ and that $V$ is
isomorphic to $\mathbb{A}_{k}^{2}$ via the restriction of the rational
map 
\[
\alpha:\tilde{S}_{0}\dashrightarrow\mathbb{A}_{k}^{2}=\mathrm{Spec}(k[x,v]),\,(x,y,z)\mapsto(x,x^{-1}(1-y)=(y+1)^{-1}z),
\]
we are reduced to check that $\alpha\circ\tilde{j}:\mathbb{A}_{k}^{1}\rightarrow\mathbb{A}_{k}^{2}$,
$t\mapsto(x(t),v(t))=(t^{p^{2}},t^{p^{2}+p}+t)$ is an embedding.
This follows from the identity $t^{p(p+1)}=(v(t)^{p}-x(t)^{p+1})^{p+1}$
which implies that the inclusion $k[x(t),v(t)]\subset k[t]$ is an
equality. 
\end{proof}
\begin{rem}
The morphism $\mathbb{A}_{k}^{1}\hookrightarrow\mathbb{A}_{k}^{2}$,
$t\mapsto(t^{p^{2}},t^{p^{2}+p}+t)$ used in the proof of the proposition
above is a typical example of closed embedding of the line in $\mathbb{A}_{k}^{2}$
whose image, as a consequence of the Jung and van der Kulk Theorem,
does not belong to the $\mathrm{Aut}(\mathbb{A}_{k}^{2})$-orbit of
the coordinate line $\{x=0\}$. 
\end{rem}
\bibliographystyle{amsplain} 

\end{document}